\newif\ifjournalstyle
  \theoremstyle{definition}
  \numberwithin{equation}{section}
\renewcommand{\PackageWarningNoLine}[2]{}
\newcommand{\eg}{e.\,g.\ }
\newcommand{\ie}{i.\,e.\ }
\newcommand{\N}{\mathbb{N}}
\newcommand{\R}{\mathbb{R}}
\newcommand{\cA}{\mathcal A}
\newcommand{\cB}{\mathcal B}
\newcommand{\cF}{\mathcal F}
\newcommand{\cG}{\mathcal G}
\newcommand{\cJ}{\mathcal J}
\newcommand{\cP}{\mathcal P}
\newcommand{\cV}{\mathcal V}
\newcommand{\cX}{\mathcal X}
\newcommand{\cZ}{\mathcal Z}
\newcommand{\abs}[1]{ \left\vert #1 \right\vert }
\newcommand{\mat}[1]{\begin{pmatrix} #1 \end{pmatrix}}
\newcommand{\norm}[1]{ \left\Vert #1 \right\Vert }
\DeclareMathOperator*{\argmin}{arg\,min}
\renewcommand{\div}{\operatorname{div}}
\DeclareMathOperator*{\Tr}{Tr}
\renewcommand{\d}{\,\mathrm{d}}
\newcommand{\dd}[1]{ \frac{\partial}{\partial #1} }
\newcommand{\eps}{\varepsilon}
\newcommand{\wt}[1]{ \widetilde{#1} }
\newtheorem{theorem}{Theorem}[section]
\newtheorem{lemma}[theorem]{Lemma}
\numberwithin{equation}{section}
\numberwithin{figure}{section}
\newcommand{\p}{\textbf{p}}
\newcommand{\q}{\textbf{q}}
\renewcommand{\u}{\textbf{u}}
\newcommand{\e}{\textbf{e}}
\begin{document}

%%%%% To ease editing, add:

%\baselineskip=17pt

%%%%%%%%%%%%%%%%%%%%%%
% Title and Abstract %
%%%%%%%%%%%%%%%%%%%%%%

\ifjournalstyle

  \title{On differentiability of the membrane-mediated mechanical interaction energy of discrete--continuum membrane--particle models}

  \author{%
    Carsten Gr\"aser\\
    Freie Universit\"at Berlin, Institut f\"ur Mathematik\\
    Arnimallee~6, D-14195~Berlin, Germany\\
    graeser@mi.fu-berlin.de\\
    \\
    Tobias Kies\\
    Freie Universit\"at Berlin, Institut f\"ur Mathematik\\
    Arnimallee~6, D-14195~Berlin, Germany\\
    tobias.kies@fu-berlin.de
  }

\else

  \title[Differentiability of membrane-mediated interaction energy]{On differentiability of the membrane-mediated mechanical interaction energy of discrete--continuum membrane--particle models}

  \author[Gr\"aser]{Carsten Gr\"aser}
  \address{Carsten Gr\"aser\\
  Freie Universit\"at Berlin\\
  Institut f\"ur Mathematik\\
  Arnimallee~6\\
  D-14195~Berlin\\
  Germany}
  \email{graeser@mi.fu-berlin.de}

  \author[Kies]{Tobias Kies}
  \address{Tobias Kies\\
  Freie Universit\"at Berlin\\
  Institut f\"ur Mathematik\\
  Arnimallee~6\\
  {D-14195}~Berlin\\
  Germany}
  \email{tobias.kies@fu-berlin.de}

\fi

\maketitle

\subsection*{Acknowledgement}
This research has been partially funded by Deutsche Forschungsgemeinschaft (DFG)
through the grant CRC 1114: “Scaling Cascades in Complex Systems”,
Project Number 235221301,
Project AP01 "Particles in lipid bilayers"
and A07 "Langevin dynamics of particles in membranes".

\begin{abstract}
  We consider a discrete--continuum model of a biomembrane
  with embedded particles. While the membrane is represented by
  a continuous surface, embedded particles are described by
  rigid discrete objects which are free to move and rotate in
  lateral direction. For the membrane we consider a linearized
  Canham--Helfrich energy functional and height and slope
  boundary conditions imposed on the particle boundaries
  resulting in a coupled minimization problem for the membrane
  shape and particle positions.

  When considering the energetically optimal membrane shape
  for each particle position we obtain a reduced energy functional
  that models the implicitly given interaction potential for the
  membrane-mediated mechanical particle--particle interactions.
  We show that this interaction potential is differentiable with
  respect to the particle positions and orientations.
  Furthermore we derive a fully practical representation
  of the derivative only in terms of well defined derivatives
  of the membrane.
  This opens the door for the application of minimization
  algorithms for the computation of minimizers of the coupled
  system and for further investigation of the interaction potential
  of membrane-mediated mechanical particle--particle interaction.

  The results are illustrated with numerical examples comparing
  the explicit derivative formula with difference quotient approximations.
  We furthermore demonstrate the application of the derived formula to implement a gradient flow
  for the approximation of optimal particle configurations.
\end{abstract}

\section{Introduction}

  Particles embedded into membranes are commonly expected to be crucial for
  various biological processes involving the shaping of the membrane.
  Examples of such particles are transmembrane and bar-domain proteins.
  For example the latter are conjectured to play an important role
  in early stages of clathrin-mediated endocytosis \cite{HauckeKozlov2018}.
  The reason for the importance of proteins for membrane shaping is,
  that they may induce local membrane deformations in their vicinity.
  Since the membrane itself consists of a lipid bilayer,
  which---in the lateral direction---can be seen as a fluid,
  such particles are able to move easily within the membrane.
  As a consequence the local particle-induced membrane deformation
  implicitly induces a membrane-mediated mechanical particle--particle interaction.
  Driven by the corresponding interaction potential, particles may cluster
  and form energetically preferable patterns.

  This observation and the fact that membrane-shaping particles seem
  to be crucial for biological membrane functions stimulated various
  research directions on such particles and membrane-mediated particle--particle
  interactions.
  A common approach is based on atomistic or coarse-grained molecular dynamics
  (MD) models of the membrane.
  To overcome the severe length scale limitations of such approaches,
  alternative modeling techniques representing the membrane as
  continuous surface that minimizes an elastic energy have been
  developed \cite{Canham1970,Helfrich1973}.
  Using such an models it was shown that there are long-range
  interactions between particles that are predominantly membrane-mediated
  \cite{GoBrPi93}. Since then further work has been done to investigate
  particle interactions within elasticity models.  Typically a flatness
  assumption on the membrane is made and particles are modeled as circular
  disks or points and their coupling to the membrane is prescribed by radially
  symmetric boundary conditions for which the interaction energy can either be
  computed analytically or approximately by asymptotic expansion
  \cite{KiNeOs98,WeKoHe98,DoFo02,YoHaDe14,FoGa15}.  However, it turns out that
  the shape of particles has a significant impact on their interaction
  \cite{KiNeOs00}.  More recent work is also interested in numerical
  computations with pattern formation of many non-circular particles
  \cite{HaPh13,KaKoKlHa16}, and attention was also given to situations where the
  flatness assumption is no longer fulfilled \cite{ElliottFritzHobbs2017,ReDe11,ScKo15}.
  Also more elaborate models for proteins in continuum elastic models have recently been
  considered in \cite{ArBeMaGr16}.

  To understand the pattern formation of particles it is desirable to
  quantify the forces exerted on the particles by the membrane in a framework
  that is as widely applicable as possible.
  General results in this direction have been obtained based on arguments
  from differential geometry \cite{Deserno2015}.
  The methods derived therein give insight into the
  qualitative behavior of particle interactions, but---to the best of the
  authors' knowledge---they have not yet been made fully available for
  numerical computations.

  In this paper we consider a discrete--continuum model where the membrane is
  modeled as a continuous graph minimizing a linearized Canham--Helfrich
  bending energy and where an arbitrary amount of particles are embedded
  into the membrane.
  These particles are
  modeled as discrete entities which are coupled to the membrane through
  certain boundary conditions.  As particles are free to move in the membrane,
  those boundary conditions depend on each particle's position.  Consequently,
  the overall system's energy given fixed boundary conditions and an optimal
  membrane shape can be written as a function of the particle positions, which
  we call the \emph{interaction energy}.

  In this setting we propose a method to prove differentiability of the
  interaction energy for arbitrary shapes and boundary conditions.
  Furthermore, we derive an expression for the derivative that can be evaluated
  numerically within a finite element scheme and where the evaluation error is
  bounded in terms of the discretization error of the finite element
  approximation.  The proof is based on an application of the implicit function
  theorem and ideas from shape calculus \cite{HiGr27,SoZo11}.  As such, the
  method is rather general and hence it naturally extends to a wider class of
  models that for example use nonlinear elastic energies or certain other
  membrane--particle couplings.

  In the following we give an outline of this paper.
  In \cref{sec:model} we introduce the Canham--Helfrich energy
  in Monge-gauge as a
  model for the membrane and parametric boundary conditions for the coupling of
  the particles.  \cref{sec:interaction} is then concerned with further
  mathematical notation that we use in order to define the interaction energy.
  There we also reformulate the parametric boundary conditions as linear
  constraints by using trace operators and appropriate projection operators.
  Afterwards, in \cref{sec:differentiation}, we prove differentiability of the
  interaction energy and derive a numerically feasible expression for the
  gradient.  Finally, \cref{sec:examples} shows some example computations that
  illustrate that the derived formula can indeed be applied in a numerical
  scheme.

\section{Membrane and particle model}\label{sec:model}

  The Canham--Helfrich model of a membrane given as two-dimensional surface
  $\mathcal{M} \subset \R^3$ is based on the elastic
  bending energy
  \begin{align*}
    J_{\text{CH}}(\mathcal{M}) \colonequals \int_{\mathcal{M}} \frac{\kappa}{2} (H-c_0)^2 + \kappa_G K + \sigma \d{\mathcal{M}}
  \end{align*}
  where $H$ and $K$ denote the mean and Gaussian curvature
  with corresponding bending rigidities $\kappa$ and $\kappa_G$,
  $\sigma$ is the surface tension, and
  $c_0$ is the so called spontaneous curvature.
  Considering membranes with fixed topology and fixed geodesic curvature
  the Gaussian curvature term is a constant and can be dropped.
  For simplicity we will also restrict our considerations to the
  case of vanishing spontaneous curvature $c_0=0$.

  Assuming that the membrane is given by a graph $\mathcal{M}=\{(x,u(x)) \,|\, x \in \Omega\}$
  which is almost flat in the sense that $|\nabla u| \ll 1$ we consider
  the well-established \emph{linearized Canham--Helfrich model in Monge-gauge:}
  Given a 2-dimensional reference domain $\Omega \subseteq \R^2$ and a function
  $u \in H^2(\Omega)$, the membrane shape is described by the graph
  of $u$.
  The bending energy of this membrane is approximated by
  \begin{align*}
    J(\Omega, u) \colonequals \frac{1}{2} \int_{\Omega} \kappa (\Delta u(x))^2 + \sigma \norm{\nabla u(x)}^2 \d{x}
  \end{align*}
  where $\kappa>0$ and $\sigma \geq 0$ denote the
  \emph{bending rigidity} and the \emph{surface tension}, respectively. 
  It is
  noted that this corresponds to the geometric linearization of the full nonlinear
  Canham--Helfrich energy $J_{\text{CH}}$ near a flat membrane with $\nabla u = 0$.
  For more details on the model we refer to \cite{ElGrHoKoWo16} and the references therein.

  In absence of particles interacting with the membrane, this model
  determines the stationary shape of the membrane solely by minimizing the
  energy $J$. In the following we explain how the embedded particles
  are coupled to the membrane,
  before we state the model problem that is
  central to this paper.

  For simplicity we first consider a single \emph{transmembrane protein} that interacts
  with the membrane.
  Such a protein is not merely connected to one side of the membrane surface
  but rather is included in the membrane.
  The reason for this situation is that the protein has a \emph{hydrophobic belt}
  which is shielded from the surrounding water by the membrane lipids.
  As a consequence the membrane preferably connects to the belt which
  in turn deforms the membrane according to its shape.
  We assume that the particle does not undergo any deformation
  and denote its rigid 3-dimensional shape $\cB \subseteq \R^3$.
  
  In the following we will suppose that the hydrophobic belt
  is approximated by a curve $\cG$.  We assume that $\cG$ is a simple
  closed curve that can be parameterized over the 2-dimensional Euclidean
  plane.  This means that there exists a simple closed curve $\Gamma = \partial B \subseteq
  \R^2$ and a continuous function $g_{0}\colon \Gamma \rightarrow \R$ such that
  $\cG = \{ (x,g_{0}(x)) \mid x \in \Gamma\}$.  This gives rise to the boundary
  condition
  $u|_{\Gamma} = g_0\text{, }$
  which models that the membrane is connected to the particle at the interface $\cG$.
  The fact that the membrane is connected perpendicular to the hydrophobic
  region is modeled by additionally assuming that it is attached to $\cG$
  with a fixed slope.
  To this end, we impose the additional boundary condition $\partial_\nu u|_{\Gamma} = g_1$
  with a function $g_1\colon \Gamma \rightarrow \R$ describing the slope.
  Here $\nu$ is an oriented unit normal on $\Gamma$ and $\partial_\nu
  u|_{\Gamma}$ denotes the normal derivative of $u$ on $\Gamma$.
  The situation is illustrated in Figure~\ref{fig:transmembrane}
  showing the hydrophobic belt $\cG$, its projection to the plane $\Gamma$,
  and the boundary conditions on $\Gamma$.

  \begin{figure}[ht]
    \begin{center}
      \input{gfx/particle_side_view.pgf}%
      \caption{%
        Side view of a particle with hydrophobic belt $\cG$, its projection to the plane $\Gamma$,
        and the height contour $g_0$ and slope $g_1$ prescribed at $\Gamma$.
        }%
        \label{fig:transmembrane}
    \end{center}
  \end{figure}

  Those constraints do not yet account for the fact that the particle is in
  principle free to move in space.  To this end we parameterize the current
  position of the particle using translations $z_j$ along the $x_j$-axes and
  rotations $\alpha_j$ around the $x_j$-axes.  More precisely, let $\cB^0$ be a
  reference state of the particle that is centered in the origin
  with hydrophobic belt $\cG^0 = \{ (\hat{x},g_{0}^0(\hat{x})) \mid \hat{x} \in \Gamma^0\}$ for some $\Gamma^0$
  and let
  $R_j(\alpha_j) \in \R^{3\times 3}$ be the $\alpha_j$-rotation matrix around
  the $x_j$-axis.
  Then for $\hat{y} \in \cB^0$ the rigid body transformation $\Psi(z,\alpha): \cB^0 \to \R^3$
  corresponding to $(z,\alpha)$ is given by
  \begin{align*}
    \Psi(z,\alpha)(\hat{y}) \colonequals \Psi(z,\alpha;\hat{y}) \colonequals R_1(\alpha_1)R_2(\alpha_2)R_3(\alpha_3)\hat{y} + z
  \end{align*}
  and we define the parameterized particle as
  \begin{align*}
    \cB = \cB(z,\alpha)
    \colonequals \left\{\Psi(z,\alpha)(\hat{y}) \mid \hat{y} \in \cB^0 \right\}.
  \end{align*}
  For the hydrophobic belt straight forward application of the parameterization
  would lead to
  \begin{align*}
    \cG = \cG(z,\alpha)
    \colonequals \left\{\Psi(z,\alpha)(\hat{y}) \mid \hat{y} \in \cG^0 \right\}.
  \end{align*}
  and the corresponding projection $\Gamma = \{(y_1,y_2) \mid y \in \cG\}$
  to the plane.
  Given the particle slope $g_1^0: \Gamma^0 \to \R$ on the reference curve,
  the membrane--particle constraints introduced above could be imposed
  to the parametrized particle by first transforming the membrane given by the graph of $u$
  according to $\Psi(z,\alpha)^{-1}$ and then imposing constraints
  \begin{align*}
    \hat{u}|_{\Gamma^*} &= g_0^0,&
    \partial_{\hat{\nu}} \hat{u}|_{\Gamma^0} &= g_1^0
  \end{align*}
  on the transformed graph of (if existing) $\hat{u}$ over the reference curve $\Gamma^0$
  with unit normal $\hat{\nu}$.
  However, this approach suffers from several drawbacks related to the
  rotations $R_1(\alpha_1)$ and $R_2(\alpha_2)$.
  First, they can easily exceed the regime where $\cG$ can be written as a graph over $\Gamma$
  and where the transformation of the graph of $u$ is a graph of some $\hat{u}$.
  Second, the shape of the projected curve $\Gamma$ differs from $\Gamma^0$
  and normals of $\Gamma^0$ do not transform to normals of $\Gamma$.
  To avoid these complications, we will simplify the transformation
  assuming small rotational angles:

  On the one hand we make the assumption that the
  reference set $\cB^0$ is oriented in such a way that the belt $\cG^0$ is
  'almost flat', by which we mean that $\max_{\hat{x} \in \Gamma^0} \abs{g^0_{0}(\hat{x})} /
  \norm{\hat{x}}$ is small.
  On the other hand we assume that the angles $\alpha_1$ and $\alpha_2$
  are small, such that the parameterized belt is still 'almost flat'.
  Based on these assumptions we first replace $R_1(\alpha_1)$ and $R_2(\alpha_2)$
  by first order Taylor expensions  $\tilde{R}_1(\alpha_1)$ and $\tilde{R}_2(\alpha_2)$
  near $\alpha_1=0=\alpha_2$
  leading to
  \begin{align*}
    R_1(\alpha_1)R_2(\alpha_2) y
    \approx
    \tilde{R}_1(\alpha_1) \tilde{R}_2(\alpha_2) y
    =
    \begin{pmatrix}
      1 & 0 & -\alpha_2\\
      -\alpha_1\alpha_2 & 1 & -\alpha_1\\
      \alpha_2 & \alpha_1 & 1
    \end{pmatrix}
    y
    =
    \begin{pmatrix}
      y_1 -\alpha_2y_3\\
      y_2 -\alpha_1\alpha_2y_1 -\alpha_1 y_3\\
      y_3 + \alpha_2y_1 + \alpha_1 y_2
    \end{pmatrix}.
  \end{align*}
  Using $y= R_3(\alpha_3)\hat{y}$
  for $\hat{y} = (\hat{x}, g_0^0(\hat{x})) \in \cG^0$
  and dropping higher order terms in
  $\alpha_1$, $\alpha_2$,
  and $y_3=\hat{y}_3 = g_0^0(\hat{x})$
  finally leads to the approximate transformation
  \begin{align*}
    \wt{\Psi}(z,\alpha)(\hat{y})
    \colonequals
    \begin{pmatrix}
      1 & 0 & 0\\
      0 & 1 & 0\\
      \alpha_2 & \alpha_1 & 1
    \end{pmatrix}
    R_3(\alpha_3) \hat{y} + z
    \approx
    \Psi(z,\alpha)(\hat{y})
  \end{align*}
  and the corresponding parametrized particle belt
  \begin{align*}
    \cG(z,\alpha)
    \colonequals \left\{\wt{\Psi}(z,\alpha)(\hat{y}) \mid \hat{y} \in \cG^0 \right\}.
  \end{align*}

  In the following we use the shortcut $p = (z_1,z_2,\alpha_3)$
  for the in-plane translation and rotation.
  Introducing
  \begin{align}
    \label{eq:rotationMatrix}
    R(\alpha) \colonequals \mat{ \cos(\alpha) & -\sin(\alpha) \\ \sin(\alpha) & \cos(\alpha) }\text{.}
  \end{align}
  the in-plane transformation induced by $p$ is given by the rigid body motion
  \begin{align}
    \label{eq:transRotation}
    \varphi(p)(\hat{x}) \colonequals \varphi(p;\hat{x})  & \colonequals R(\alpha_3)\hat{x} + \mat{z_1\\z_2}
  \end{align}
  with its inverse
  \begin{align*}
    \varphi^{-1}(p;x) \colonequals \varphi(p)^{-1}(x) \colonequals R(-\alpha_3)\left( x - \mat{z_1\\z_2}\right).
  \end{align*}
  Utilizing $\varphi$ we can write $\wt{\Psi}$ on $\cG^0$ as
  \begin{align*}
    \wt{\Psi}(z,\alpha)(\hat{x},g_0^0(\hat{x}))
    =
    \begin{pmatrix}
      \varphi(p)(\hat{x}) \\
      g_0^0(\hat{x}) + z_3 + (\alpha_2, \alpha_1)R(\alpha_3) (\hat{x})
    \end{pmatrix}.
  \end{align*}
  Thus $\cG(z,\alpha)$ can be written as a graph over its projection to
  the plane
  \begin{align*}
    \Gamma(p)
      \colonequals \Gamma(z,\alpha)
      \colonequals \left\{(y_1,y_2) \mid y \in \cG(z,\alpha) \right\}
      = \left\{\varphi(p)\hat{x} \mid \hat{x} \in \Gamma^0\right\}
      = \varphi(p;\Gamma^0).
  \end{align*}
  Figure~\ref{fig:translated_particle} illustrates the
  reference particle curve $\Gamma^0$ and the transformed particle curve
  $\Gamma(p) = \Gamma(z_1,z_2,\alpha_3)$.
  The figure shows a top view that complements the side view
  of Figure~\ref{fig:transmembrane}.

  \begin{figure}[ht]
    \begin{center}
      \input{gfx/particle_top_view.pgf}%
      \caption{%
        Top view of reference particle $\Gamma^0$ and moved particle $\Gamma(z_1,z_2,\alpha_3)$.
        }%
        \label{fig:translated_particle}
    \end{center}
  \end{figure}

  The zeroth and first order boundary condition for the parameterized
  particle are now obtained by pulling back the graph of $u$ using
  the approximate transformation $\wt{\Psi}(z,\alpha)^{-1}$
  and then imposing the conditions for $g^0_0$ and $g^0_1$ on $\Gamma^0$.
  Noting that $\Gamma^0 = \varphi^{-1}(p;\Gamma(p))$
  and defining $g_k(p;x) \colonequals g_k^0(\varphi^{-1}(p;x))$
  the boundary conditions then read
  \begin{align*}
    u(x) & = g_0(p;x) + \alpha_2 (x_1-z_1) + \alpha_1(x_2-z_2) + z_3 && \text{on $\Gamma(z_1,z_2,\alpha_3)$}, \\
    \partial_\nu u(x) & = g_1(p;x) + \alpha_2 \nu_1 + \alpha_1 \nu_2 && \text{on $\Gamma(z_1,z_2,\alpha_3)$},
  \end{align*}
  where $\nu_1$ and $\nu_2$ denote the components of
  an oriented unit normal on $\Gamma(p)$.

  The dependence of these boundary conditions
  on the parameters $(z,\alpha)$ can be split naturally into the
  nonlinear dependence on the in-plane position $p=(z_1,z_2,\alpha_3)$
  and the linear dependence on variations of the height
  and tilt given by $(z_3,\alpha_1,\alpha_2)$.
  We will exploit the latter by minimizing the membrane energy with
  respect to $u$ and $(z_3,\alpha_1,\alpha_2)$ simultaneously.
  It can easily be seen, that this can be written equivalently be factoring
  out $(z_3,\alpha_1,\alpha_2)$ from the boundary condition in the
  sense that they are only enforced up to an arbitrary selection of $(z_3,\alpha_1,\alpha_2)$.
  Thus the boundary conditions become \emph{parametric boundary
  conditions}:
  \begin{align}
    \exists \gamma \in \R^3\colon
    \Biggl\{
    \begin{split}
      \label{eq:parametricBC}
      u|_{\Gamma(p)}(x) & = g_0(p;x) + \gamma_1 x_1 + \gamma_2 x_2 + \gamma_3
      \\ \partial_\nu u|_{\Gamma(p)}(x) & = g_1(p;x) + \gamma_1 \nu_1 + \gamma_2 \nu_2\text{.}
    \end{split}
    \qquad\forall x \in \Gamma(p).
  \end{align}
  Notice that, $z_3$, $\alpha_1$, and $\alpha_2$ are no longer relevant for
  describing the particle's position and are now rather implicit to the
  boundary conditions.  A particle in the model is then solely determined by
  its reference curve $\Gamma^0$, its reference boundary conditions $g_0^0,
  g_1^0$, and its position $p = (z_1, z_2, \alpha_3)$ in the Euclidean plane.

  Parameterized boundary conditions for particle--membrane coupling
  with variable height and tilt have first been considered in~\cite{ElGrHoKoWo16}.
  They can be interpreted physically as the particles being tied
  only to the membrane such that they can freely change their height
  and tilt angle with the membrane. In contrast, the lateral motion
  of particles is an independent process. This is motivated by the fact,
  that the membrane has a bending rigidity in normal direction but behaves
  like a viscous fluid in tangential direction.

  In the case where multiple particles are present we state the constraints
  analogously by imposing the above constraints for each particle separately.

\section{Interaction energy}\label{sec:interaction}

  Before we can formulate the final model problem, we need to introduce some
  notation.  We also augment the parametric boundary conditions by Dirichlet
  boundary conditions on the outer boundary $\partial\Omega$ which reflects the
  fact that we consider a small almost flat patch of the full membrane.
  For a discussion of other possible boundary conditions we refer to~\cite{ElGrHoKoWo16}.
  Particle--membrane coupling and outer boundary conditions
  will both be formulated using simple linear operators in the following.

  We consider $N$ particles with reference curves $\Gamma_i^0$, height profiles
  $g_{i0}^0$ and slopes $g_{i1}^0$.  Given a \emph{particle configuration}
  $\p = (\p_i)_{i=1,\dots,N} \in \R^{N\times 3}$ we define the curves
  \begin{align*}
    \Gamma_i(\p_i)  & \colonequals \{ \varphi(\p_i;y) \mid y \in \Gamma_i^0 \}
  \end{align*}
  and $\Gamma_0 \colonequals \Gamma_0^0 \colonequals \partial\Omega$.
  Their union is $\Gamma(\p) \colonequals \bigcup_{i=0}^N \Gamma_i(\p_i)$ where we use
  $\p_0 \colonequals 0$ and $\Gamma_0(\p_0) \colonequals \Gamma_0$ for the sake of a consistent
  notation.
  For $i > 0$ we denote the set enclosed by $\Gamma_i(\p_i)$ by $B_i(\p_i)$ and the
  union of these is denoted by $B(\p) \colonequals \bigcup_{i=1}^N B_i(\p_i)$.
  We define the $\p$-dependent reference domain as $\Omega(\p) \colonequals \Omega
  \setminus B(\p)$.
  
  Based on this we define the interior of the set of \emph{feasible particle
  configurations} as
  \begin{align*}
    \Lambda^\circ \colonequals \left\{ \p \in \R^{N\times 3} \mid \forall{i,j \in \{1,\dots,N\}, i \neq j\colon}\ \Gamma_i(\p_i) \subseteq \Omega^\circ \ \wedge \ \overline{B_i(\p_i)} \cap \overline{B_j(\p_j)} = \emptyset\right\}\text{,}
  \end{align*}
  and set $\Lambda \colonequals \overline{\Lambda^\circ}$.
  
  Now suppose $\p \in \Lambda$.
  Then we define the trace operators
  \begin{align}
    \label{eq:traceOperator}
    \begin{aligned}
      T_i(\p)\colon H^2(\Omega(\p)) & \longrightarrow H^{3/2}(\Gamma_i^0) \times H^{1/2}(\Gamma_i^0)
      \\ u & \longmapsto \mat{ u|_{\Gamma_i(\p_i)} \circ \varphi(\p_i) \\ (\partial_\nu u|_{\Gamma_i(\p_i)}) \circ \varphi(\p_i) }\text{.}
    \end{aligned}
  \end{align}
  Here $\nu$ is the unit outer normal on $\Gamma(\p)$ with respect to the domain $\Omega(\p)$.
  We define the joint trace operator by $T(\p)u \colonequals (T_i(\p)u)_{i=0,\dots,N}$.
  We also define $g_{0k}^0 \colonequals 0$, $g \colonequals ( (g_{i0}^0, g_{i1}^0) )_{i=0,\dots,N}$,
  and $g_i = (g_{i0}^0, g_{i1}^0)$.
  
  In the next step we prove a useful reformulation of the parametric boundary
  conditions as linear constraints.
  The aim is to get rid of the parameters $\gamma$ in conditions of the
  form~\eqref{eq:parametricBC} for the $i$-th particle
  by writing it as $P_iT_i(\p)u = P_ig_i$ for a suitable projection operator
  \begin{align*}
    P_i\colon L^2(\Gamma_i^0) \times L^2(\Gamma_i^0) & \longrightarrow L^2(\Gamma_i^0) \times L^2(\Gamma_i^0).
  \end{align*} 
  For the domain boundary represented by $i=0$ we can simply use
  $P_0(v_1,v_2) \colonequals (v_1,v_2)$.
  While all $P_i$ are different, the construction works exactly
  the same for all $i \in \{1,\dots,N\}$.
  Thus, to simplify the notation,
  we will drop the index $i$ in the following construction of $P_i = P$.
  Define for $\hat{x} \in \R^2$ the functions
  \begin{align*}
      \eta_{1}(\hat{x}) \colonequals \hat{x}_1,
    \qquad \eta_{2}(\hat{x}) \colonequals \hat{x}_2,
    \qquad \eta_{3}(\hat{x}) \colonequals 1
    \text{,}
  \end{align*}
  spanning the space $V\colonequals \operatorname{span}\{\eta_1, \eta_2, \eta_3\}$.

  We will define $P$ such that
  $\{(v, \partial_{\hat{\nu}} v) \mid v \in V\} \subset L^2(\Gamma^0) \times L^2(\Gamma^0)$
  is the kernel of $P$ where $\hat{\nu}$ again denotes the unit normal to $\Gamma^0$.
  To this end let
  $\cP: L^2(\Gamma^0) \to V$ be the $L^2(\Gamma^0)$-orthogonal projection into $V$.
  Notice that $\cP$ can easily be computed using
  \begin{align*}
    \cP(v) = \Theta \left( G^{-1}
    \begin{pmatrix}
      \langle v, \eta_1 \rangle_{L^2(\Gamma^0)} \\
      \langle v, \eta_2 \rangle_{L^2(\Gamma^0)} \\
      \langle v, \eta_3 \rangle_{L^2(\Gamma^0)}
    \end{pmatrix}
    \right)
  \end{align*}
  where $G \in \R^{3 \times 3}$ is the Gramian matrix with
  $G_{jk} = \langle \eta_k, \eta_j \rangle_{L^2(\Gamma^0)}$
  and $\Theta: \R^3 \to V$ the coordinate isomorphism
  with $\Theta(\xi) = \xi_1 \eta_1 + \xi_2 \eta_2 + \xi_3 \eta_3$.
  Now we define
  $P \colon L^2(\Gamma^0) \times L^2(\Gamma^0) \longrightarrow L^2(\Gamma^0) \times L^2(\Gamma^0)$ by
  \begin{align*}
    P(v_1, v_2) =
    \begin{pmatrix}
      v_1 - \cP(v_1)|_{\Gamma^0} \\
      v_2 - \partial_{\hat{\nu}} \cP(v_1)|_{\Gamma^0}
    \end{pmatrix}.
  \end{align*}
  Since $\cP$ is an orthogonal projection into $V$, we have
  \begin{align*}
    0 = P(v_1,v_2)
    \qquad\Longleftrightarrow\qquad
    v_1 \in V \text{ and } v_2 = \partial_{\hat{\nu}} v_1.
  \end{align*}
  Hence $\{(v, \partial_{\hat{\nu}}  v) \mid v \in V\} \subset L^2(\Gamma^0) \times L^2(\Gamma^0)$
  is indeed the kernel of $P$.

  \begin{lemma}[parametric boundary conditions as linear constraint]
    \label{thm:projEquivalence}
    Let $i \in \{1,\dots,N\}$.
    Then $P_i$ is a well-defined linear projection operator
    such that for all $u \in H^2(\Omega(\p))$
    \begin{align}\label{eq:linearConstraint}
      P_iT_i(\p)u = P_ig_i
    \end{align}
    holds if and only if
    \begin{align}
      \label{eq:parametricConstraint}
      \exists \gamma \in \R^3 \colon
      \Biggl\{
      \begin{split}
        u|_{\Gamma_i}(x)   & = g_{i1}(\varphi(\p_i)^{-1}(x)) + \gamma_{1} x_1 + \gamma_{2} x_2 + \gamma_{3}
        \\ \partial_\nu u|_{\Gamma_i}(x) & = g_{i2}(\varphi(\p_i)^{-1}(x)) + \gamma_{1} \nu_1 + \gamma_{2} \nu_2
      \end{split}
      \qquad\forall x \in \Gamma_i(\p_i).
    \end{align}
    For $i = 0$ equation \eqref{eq:linearConstraint} is equivalent to
    $u|_{\partial\Omega} = \partial_\nu u|_{\partial\Omega} = 0$.
  \end{lemma}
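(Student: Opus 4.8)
The plan is to treat the three assertions of the lemma --- well-definedness of $P_i$, its idempotency, and the claimed equivalence --- in turn, reducing everything to the elementary observation that the affine functions $y\mapsto y_1$, $y\mapsto y_2$, $y\mapsto 1$ together with their normal derivatives transform among themselves under the rigid motion $\varphi(\p_i)$.

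First, for well-definedness I would show that the Gram matrix $C_i^\ast C_i \in \R^{3\times 3}$ is invertible. It is invertible precisely when the restrictions $\eta_{i1}|_{\Gamma_i^0}$, $\eta_{i2}|_{\Gamma_i^0}$, $\eta_{i3}|_{\Gamma_i^0}$ of $y_1,y_2,1$ are linearly independent in $L^2(\Gamma_i^0)$. Since $\Gamma_i^0$ is a simple closed curve it is not contained in any affine line, so no nontrivial affine function vanishes on it; hence $C_i$ is injective, $C_i^\ast C_i$ is symmetric positive definite, and $(C_i^\ast C_i)^{-1}$, and with it $P_i$, is well-defined. Writing $Q_i := C_i(C_i^\ast C_i)^{-1}C_i^\ast$ (the orthogonal projection onto the range of $C_i$) and $\wt Q_i := \wt C_i(C_i^\ast C_i)^{-1}C_i^\ast$, we have $P_i(v_1,v_2) = ((\mathrm{id}-Q_i)v_1,\, v_2-\wt Q_i v_1)$, and idempotency $P_i^2=P_i$ reduces to $Q_i^2=Q_i$ in the first slot and to $\wt Q_i Q_i = \wt Q_i$ in the second, both of which follow immediately from $C_i^\ast C_i\,(C_i^\ast C_i)^{-1}=\mathrm{id}$.

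For the equivalence I would set $(v_1,v_2):=T_i(\p)u$ and $(w_1,w_2):=g_i$. The first component of $P_iT_i(\p)u = P_ig_i$ reads $(\mathrm{id}-Q_i)(v_1-w_1)=0$, which is equivalent to $v_1-w_1\in\operatorname{ran}(C_i)$, i.e. $v_1-w_1 = C_i\tilde\gamma$ for a unique $\tilde\gamma\in\R^3$. Substituting this into the second component and using $\wt Q_i C_i = \wt C_i$ turns it into $v_2-w_2 = \wt C_i\tilde\gamma$. These two identities say exactly that, after pullback to $\Gamma_i^0$, the height trace equals $g_{i0}^0$ plus the affine function with coefficients $\tilde\gamma$ while the slope trace equals $g_{i1}^0$ plus the normal derivative of that same affine function. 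I would then transport both identities to the physical curve $\Gamma_i(\p_i)$: because $\varphi(\p_i)$ is affine with rotation part $R(\alpha_3)$, the pullback of $\gamma_1 y_1+\gamma_2 y_2+\gamma_3$ is again affine, and the physical normal satisfies $\nu = R(\alpha_3)\hat\nu$, so that the two coefficient vectors are related by an invertible linear map. This produces exactly \eqref{eq:parametricConstraint} with $\gamma$ in bijection with $\tilde\gamma$, giving the equivalence. The case $i=0$ is immediate, since $P_0=\mathrm{id}$, $\p_0=0$ and $g_0=0$ force $u|_{\partial\Omega}=\partial_\nu u|_{\partial\Omega}=0$.

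The main obstacle I anticipate is the bookkeeping in this final change of variables: one must verify that the single coefficient vector $\tilde\gamma$ governing the affine perturbation of the height \emph{simultaneously and consistently} governs the normal-derivative perturbation of the slope. This compatibility is precisely what the off-diagonal structure of $P_i$ --- subtracting $\wt C_i(C_i^\ast C_i)^{-1}C_i^\ast v_1$, built from $v_1$ rather than $v_2$, in the second slot --- is designed to encode, and checking that $R(\alpha_3)$ acts compatibly both on the coordinates $(y_1,y_2)$ and on the normal $\nu$ is the one step where the geometry, as opposed to pure linear algebra, genuinely enters.
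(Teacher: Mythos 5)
Your proposal is correct and follows essentially the same route as the paper's proof: invertibility of the Gram matrix $C_i^\ast C_i$ via linear independence of $1, y_1, y_2$ on the simple closed curve $\Gamma_i^0$, idempotency from $Q_i^2=Q_i$ and $\wt{Q}_iQ_i=\wt{Q}_i$, and the equivalence established first on the reference curve (your range condition $v_1-w_1\in\operatorname{ran}(C_i)$ is the same as the paper's explicit $\wt{\gamma}=(C_i^\ast C_i)^{-1}C_i^\ast(\hat{u}-g_{i1})$) and then transported to $\Gamma_i(\p_i)$ using $\nu = R(\alpha_3)\hat{\nu}$. The compatibility point you flag at the end is exactly the step the paper handles via the relation $R(-\alpha)\nu(y)=\nu(\hat{y})$ and the resulting change of coefficients $(\gamma_1,\gamma_2)^T=R(\alpha)(\wt{\gamma}_1,\wt{\gamma}_2)^T$.
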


  \begin{proof}
    The statement for $i=0$ is trivial, hence we only consider
    $i \in \{1,\dots,N\}$ and again drop the index $i$ for simplicity.
    Well-definedness of $P$ directly follows from the fact
    that $\eta_1,\eta_2,\eta_3 \in L^2(\Gamma^0)$ are linearly independent
    such that $\cP$ is well-defined.
    $P$ is linear, because it is the composition of linear maps.
    
    To show that $P$ is a projection, let $(v_1,v_2) \in L^2(\Gamma^0) \times L^2(\Gamma^0)$
    and set $(w_1,w_2) = P(v_1,v_2)$.
    Since $\cP$ is an $L^2(\Gamma^0)$-orthogonal projection we have
    \begin{align*}
      \cP(w_1)
      = \cP(v_1) - \cP(\cP(v_1)|_{\Gamma^0})
      = \cP(v_1) - \cP(\cP(v_1)) = 0
    \end{align*}
    and thus $P^2(v_1,v_2) = P(w_1,w_2) = (w_1, w_2) = P(v_1,v_2)$.

    Finally we show equivalence of~\eqref{eq:linearConstraint} and~\eqref{eq:parametricConstraint}.
    To this end we first note that
    $\partial_\nu \eta_1 = \nu_1$,
    $\partial_\nu \eta_2 = \nu_2$, and
    $\partial_\nu \eta_3 = 0$.
    Furthermore, since $\varphi(\p)$ is a rigid body motion in $\R^2$,
    the normal vectors $\hat{\nu}$ and $\nu$ of $\Gamma^0$ and $\Gamma(\p)$
    and corresponding normal derivatives transform according to
    \begin{align*}
      \nu(\varphi(\p)(\hat{x})) 
        &= (D \varphi(\p))(\hat{x}) \hat{\nu}(\hat{x}), &
      \partial_{\hat{\nu}}(v \circ \varphi(\p))
        &= (\partial_\nu v)\circ \varphi(\p),
    \end{align*}
    where $D\varphi(\p)$ denotes the Jacobian of the map $\varphi(\p)$.
    Using the trace operator and the formulas for $\partial_\nu \eta_k$ from above, we can write~\eqref{eq:parametricConstraint} compactly as
    \begin{align*}
      \exists \gamma \in \R^3 \colon\qquad
      ((T(\p)u) -g) \circ \varphi(\p)^{-1}
        = \sum_{k=1}^3 \gamma_k \mat{\eta_k \\ \partial_\nu \eta_k}.
    \end{align*}
    By transformation with $\varphi(\p)$ we find that this is equivalent to
    \begin{align*}
      \exists \gamma \in \R^3 \colon\qquad
        (T(\p)u) -g
        = \sum_{k=1}^3 \gamma_k \mat{\eta_k \circ \varphi(\p) \\ (\partial_\nu \eta_k) \circ \varphi(\p)}
        = \sum_{k=1}^3 \gamma_k \mat{\eta_k \circ \varphi(\p) \\ \partial_{\hat{\nu}} (\eta_k \circ \varphi(\p))}
    \end{align*}
    which can also be written as
    \begin{align*}
      (T(\p)u) -g
          \in \bigl\{ (v ,\partial_{\hat{\nu}} v) \mid v = w \circ \varphi(\p), w \in V \bigr\}
          = \bigl\{ (v ,\partial_{\hat{\nu}} v) \mid v \in V \bigr\}.
    \end{align*}
    Here we used the fact, that the transformation of the space $V$ of affine linear functions
    by $\varphi(\p)$ is $V$ itself.
    Since the right-hand-side is the kernel of $P$, the last inclusion
    is equivalent to
    \begin{align*}
      P (T(\p)u -g) = 0
    \end{align*}
    and thus~\eqref{eq:linearConstraint}.
  \end{proof}

  For notational convenience we define $Pv \colonequals (P_iv_i)_{i=0,\dots,N}$.
  The set of \emph{feasible membranes given the particle configuration $\p$} is defined by
  \begin{align*}
    U(\p) \colonequals \left\{ u \in H^2(\Omega(\p)) \mid P(T(\p)u-g) = 0 \right\}\text{.}
  \end{align*}

  We use the shorthand $J(\p,u) \colonequals J(\Omega(\p),u)$ to define the \emph{interaction energy}
  \begin{align*}
    \cJ(\p) \colonequals \min_{u \in U(\p)} J(\p,u)
  \end{align*}
  where we use the convention $\min(\emptyset) \colonequals +\infty$.
  Altogether, our model problem then reads
  \begin{align*}
    \min_{\p \in \Lambda} \cJ(\p)\text{.}
  \end{align*}

  In order to ensure well-posedness of the minimization problems
  on $U(\p)$ we will from now on assume that $g$ is smooth enough.
  Then the following lemma allows to show well-posedness using
  Lax--Milgram's theorem.
  \begin{lemma}
    \label{lemma:ellipticity}
    Let $\p \in \Lambda$.
    Then the affine subspace $U(\p)$
    can be written as $U(\p) = U_0(\p) + \hat{g}$
    for a closed subspace $U_0(\p)$ of $H^2(\Omega(\p))$
    and some $\hat{g} \in H^2(\Omega(\p))$.
    Furthermore the bilinear form
    \begin{align*}
      a(u,v) = \int_{\Omega(\p)} \kappa \Delta u \Delta v + \sigma \nabla u \cdot \nabla x \d x
    \end{align*}
    is $H^2(\Omega(\p))$-elliptic on $U_0(\p)$.
  \end{lemma}
  \begin{proof}
    First let $\hat{g} \in H^2(\Omega(\p))$ a function such that
    $T(\p)\hat{g} = g$. Then it is clear that
    \begin{align*}
      U_0(\p)
        \colonequals U(\p) - \hat{g}
        =  \left\{ u \in H^2(\Omega(\p)) \mid PT(\p)u=0 \right\}
    \end{align*}
    is a subspace of $H^2(\Omega(\p))$.
    Continuity of the trace operator $T(\p)$ and the $L^2$-projection $\cP$
    implies that $P \circ T(\p)$ is continuous and hence $U_0(\p)$ and $U(\p)$ are
    closed.

    Continuity of $a(\cdot,\cdot)$ on $U_0(\p)$ and $U(\p)$ is obvious.
    To show coercivity let $u \in U_0(\p)$. First we note that
    $u=\partial_\nu u = 0$ on $\partial \Omega$.
    Using similar arguments as in the proof of Lemma~\ref{thm:projEquivalence}
    we find that for each $i \in \{1,\dots,N\}$
    the trace of $u$ on $\Gamma_i(\p_i)$
    coincides with some affine linear function $w_i \in V$.
    Hence, in each $B_i(\p_i)$ we can extend $u$
    by the corresponding function $w_i$ to obtain a function
    $\tilde{u} \in H^2_0(\Omega)$ which coincides with $u$ in $\Omega(\p)$
    and is affine linear in each $B_i(\p_i)$.

    Using the notation $\|\cdot\|_{0,M}$ and $|\cdot|_{2,M}$
    for the $L^2$-norm and the $H^2$-half norm on the domain $M$, respectively,
    we now have
    \begin{align*}
      \|\Delta u\|_{0,\Omega(\p)}^2
      = \|\Delta \tilde{u}\|_{0,\Omega}^2
      = |\tilde{u}|_{2,\Omega}^2
      = |u|_{2,\Omega(\p)}^2.
    \end{align*}
    In the first and third step we used that all second order partial
    derivatives of $\tilde{u}|_{B_i(\p_i)} = w_i$
    vanish on $B_i(\p_i)$.
    The second step follows from $\tilde{u} \in H_0^2(\Omega)$
    (see, e.g., \cite[Lemma~5]{Graeser2015}).
    Finally we obtain for some constant $C>0$ independent of $u$
    \begin{align*}
      a(u,u)
        \geq \kappa^{-1} \|\Delta u\|_{0,\Omega(\p)}^2
        = \kappa^{-1} |u|_{2,\Omega(\p)}^2
        \geq C\kappa^{-1} \|u\|^2_{H^2(\Omega(\p))}.
    \end{align*}
    The last bound follows from the fact that $U_0(\p)$
    does not contain any nontrivial affine linear functions
    due to the boundary conditions on $\partial \Omega$
    (see, e.g., \cite[Corollary~2]{Graeser2015}).
  \end{proof}

\section{Differentiation of the reduced interaction energy}\label{sec:differentiation}
  
  In this section we investigate the differentiability of $\cJ$ on $\Lambda^\circ$.
  First we derive a technical result which shows that the admissible membrane
  sets $U(\p)$ are isomorphic and which allows us to pose our problem locally
  over a fixed reference domain $\Omega(\p)$.
  Afterwards we apply the implicit function theorem to prove differentiability
  of the reduced interaction potential and use matrix calculus to derive an
  explicit and numerically feasible expression for the first order derivatives.

  \subsection{Trace-preserving diffeomorphisms between the reference domains} 
  
    In this part we construct a local diffeomorphism between the domains
    $\Omega(\p)$ that preserves the boundary conditions.
    The basic setting is, that given a particle
    configuration $\p \in \Lambda^\circ$ we want to investigate
    the problem under changes of $\p$ along a given direction $\q \in \R^{N \times 3}$.
    To this end we will construct a diffeomorphism $\cX(\q)$
    from $\Omega(\p)$ to $\Omega(\p + \q)$ and show that is has
    the desired properties.
    The construction is based on ordinary differential equations (ODEs) and
    in particular requires the following result from ODE theory.

    For a function $\cF$ depending on multiple arguments, we denote by
    $\frac{\partial \cF}{\partial a}$ the derivative with respect to
    the argument denoted by $a$. If one of the arguments
    is a spatial coordinate in $\R^2$ we denote the $m$-th order derivative
    with respect to the spatial coordinate by $D^m$.

    \begin{lemma}
      \label{thm:propertiesODE}
      Let $\cB \subseteq \R^{N\times 3}$ be an open connected set, $m > 1$ and
      let $\cV \in C^m( [0,1] \times \cB \times \R^2, \R^2)$ be
      Lipschitz-continuous.  For $\q \in \cB$ and $x \in \R^2$ let
      $\eta(\cdot,\q,x)\colon [0,1] \rightarrow \R^2$ be the unique solution of
      the ordinary differential equation
      \begin{align*}
        \frac{\partial \eta}{\partial t}(t,\q,x) = \cV(t,\q,\eta(t,\q,x)),
        \qquad \eta(0,\q,x) = x\text{.}
      \end{align*}
      Then the map $\cX$ defined by $\cX(\q,x) \colonequals \eta(1,\q,x)$ fulfills
      $\cX \in C^m(\cB \times \R^2,\R^2)$ and is an $m$-diffeomorphism onto its
      image for all $\q \in \cB$.
      
      For all $\hat{\q} \in \R^{N\times 3}$ with
      $\eta_{\hat{\q}} \in C([0,1] \times \cB \times \R^2, \R^2)$
      as the unique solution of
      \begin{align}
        \label{eq:Xderivativeq}
        \begin{aligned}
          \frac{\partial \eta_{\hat{\q}}}{\partial t}(t,\q,x) & = \frac{\partial\cV}{\partial \q}(t,\q,\eta(t,\q,x)) \, \hat{\q} + D\cV(t,\q,\eta(t,\q,x))\eta_{\hat{\q}}(t,\q,x)
          \\ \eta_{\hat{\q}}(0,\q,x) & = 0
        \end{aligned}
      \end{align}
      holds $\partial_{\hat{\q}} \cX(\q,x) = \eta_{\hat{\q}}(1,\q,x)$.
      Also, for all $y \in \R^2$ with $\eta_y \in C([0,1] \times \cB \times \R^2, \R^2)$
      as the unique solution of
      \begin{align}
        \label{eq:Xderivativex}
        \begin{aligned}
          \frac{\partial \eta_y}{\partial t}(t,\q,x) & = D\cV(t,\q,\eta(t,\q,x)) \eta_y(t,\q,x)
          ,\qquad\eta_y(0,\q,x) = y
        \end{aligned}
      \end{align}
      holds $\partial_y \cX(\q,x) = \eta_y(1,\q,x)$.
    \end{lemma}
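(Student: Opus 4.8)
The plan is to treat this as a structured consequence of the classical theory of smooth dependence of ODE solutions on initial data and parameters, assembling the standard ingredients and then pinning down the precise form of the two derivatives. The observation that unifies the dependence on the parameter $\q$ and on the initial value $x$ is to augment the state: setting $z := (\q,\eta) \in \cB \times \R^2$ and considering the system $\partial_t z = (0,\cV(t,z))$ with $z(0) = (\q,x)$, dependence on $\q$ becomes dependence on an initial value of a system whose right-hand side inherits the $C^m$ and Lipschitz regularity of $\cV$. It therefore suffices to control dependence on initial data, and I would organize the argument so that the parameter case follows from the initial-value case.

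First I would establish global solvability and joint regularity. Since $\cV$ is Lipschitz in its spatial argument, Picard--Lindel\"of yields a unique solution $\eta(\cdot,\q,x)$, and the global Lipschitz bound precludes finite-time blow-up, so this solution exists on all of $[0,1]$ for every $(\q,x) \in \cB \times \R^2$. Invoking the classical theorem on $C^m$-dependence of solutions on initial data and parameters, the flow $(t,\q,x) \mapsto \eta(t,\q,x)$ is $C^m$ on $[0,1] \times \cB \times \R^2$; evaluating at $t = 1$ gives $\cX \in C^m(\cB \times \R^2,\R^2)$.

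Next, fix $\q \in \cB$ and write $\Phi := \cX(\q,\cdot)$. Injectivity of $\Phi$ follows from backward uniqueness: if $\eta(1,\q,x_1) = \eta(1,\q,x_2)$, the two trajectories coincide on all of $[0,1]$, so $x_1 = x_2$. The Jacobian $\partial_x\Phi(x)$ is the time-$1$ value $M(1)$ of the matrix solution $M$ of the linear variational equation $\partial_t M = \partial_x\cV(t,\q,\eta)\,M$, $M(0) = I$ (of which \eqref{eq:Xderivativex} is the column in direction $y$); by Liouville's formula $\det M(t) = \exp\bigl(\int_0^t \Tr\,\partial_x\cV(s,\q,\eta)\,\d s\bigr) > 0$, so $\partial_x\Phi$ is everywhere invertible. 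Hence $\Phi$ is an open map with open image, and an injective $C^m$ map with everywhere-invertible differential onto its open image is, by the inverse function theorem, an $m$-diffeomorphism onto its image (equivalently, its inverse is realized concretely by the $C^m$ backward flow from $t = 1$ to $t = 0$).

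Finally I would derive the two variational formulas from the Picard integral equation $\eta(t,\q,x) = x + \int_0^t \cV(s,\q,\eta(s,\q,x))\,\d s$. The joint $C^m$-regularity established above justifies differentiating under the integral sign: differentiating in $x$ along $y$ gives the integral form of \eqref{eq:Xderivativex}, while differentiating in $\q$ along $\hat{\q}$---using that the initial value $x$ is independent of $\q$, whence the zero initial condition---gives the integral form of \eqref{eq:Xderivativeq}; both are linear ODEs with continuous, bounded coefficients along the trajectory and are therefore uniquely solvable on $[0,1]$. Evaluating at $t = 1$ identifies $\partial_y\cX = \eta_y(1,\q,x)$ and $\partial_{\hat{\q}}\cX = \eta_{\hat{\q}}(1,\q,x)$. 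The main obstacle is the joint $C^m$-regularity of the flow: everything else is a fairly direct consequence once this is in hand, so the substance of the argument lies in correctly invoking the smooth-dependence theorem---including checking that the Lipschitz hypothesis delivers solutions that are global both in time on $[0,1]$ and in space on $\R^2$---and in handling the parameter dependence, which the augmentation trick reduces to the initial-value case.
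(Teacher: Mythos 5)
Your proposal is correct and follows essentially the same route as the paper: global well-posedness via Picard--Lindel\"of, joint $C^m$-regularity and the variational equations \eqref{eq:Xderivativeq}, \eqref{eq:Xderivativex} from the classical smooth-dependence theory (which the paper handles by citing Hartman), and invertibility of $\cX(\q,\cdot)$ from the backward flow. The only cosmetic difference is that the paper establishes the diffeomorphism property by explicitly exhibiting the inverse as the time-reversed flow $\mu(1,\q,\cdot)$ and appealing to uniqueness, whereas you combine backward uniqueness with Liouville's formula and the inverse function theorem --- an equivalent argument that you yourself note reduces to the backward flow.
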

    
    \begin{proof}
      The global existence and uniqueness of $\eta$ is a consequence of the
      Lipschitz-continuity of $\cV$ and the well-known Picard--Lindel\"of
      theorem.  In particular, $\cX$ is well-defined.
      
      The smoothness of $\cX$ and the characterization of its derivatives is a consequence of
      \cite[Theorem 3.1, Theorem 4.1]{Hartman02}

      Concerning the inverse of $\cX(\q)$, we note that
      $\wt{\eta}(t,\q,x) \colonequals \eta(1-t,\q,x)$
      solves the equation
      \begin{align*}
        \frac{\partial\wt{\eta}}{\partial t}(t,\q,x)
        = -\cV(1-t,\q,\wt{\eta}(1,\q,x))
        ,\qquad \wt{\eta}(0,\q,x) = \eta(1,\q,x) = \cX(\q,x).
      \end{align*}
      Since $\wt{\eta}(1,\q,x) = x$, the inverse
      of $\cX(\q)$ is given by $\cX(\q)^{-1} \colonequals \wt{\eta}(1,\q,\cdot)$.
      Again, the smoothness of $\cV$ implies $m$-smoothness of $\wt{\eta}$,
      and consequently $\cX$ is an $m$-diffeomorphism.
    \end{proof}
    
    In the following we restrict ourselves to a special class of vector fields
    that is described in the result below.
    We show afterwards that the diffeomorphisms induced by such vector fields
    have a certain trace preserving property that again can be used to
    construct an isomorphism between the admissible membrane sets.
    
    \begin{lemma}
      \label{thm:existenceCV}
      Let $\p \in \Lambda^\circ$ and $m \geq 1$.
      Then there exists an open neighborhood $\cB \subseteq \R^{N\times 3}$ of
      $0 \in \R^{N\times 3}$ and a Lipschitz-continuous map
      $\cV \in C^m( [0,1] \times \cB \times \R^2, \R^2)$ such that for all
      $t \in [0,1]$, $\q \in \cB$, and $i \in \{0,\dots,N\}$ holds
      \begin{align}
        \label{eq:Vconditions}
        \begin{aligned}
          \left.\cV(t,\q,\cdot)\right|_{\Gamma_i(\p_i+t\q_i)} & = \mat{ \q_{i1} \\ \q_{i2}} + \q_{i3} \mat{ 0 & -1 \\ 1 & 0 } \left( \cdot - \mat{\p_{i1}+t\q_{i1} \\ \p_{i2}+t\q_{i2}} \right)
          \\ \left.D\cV(t,\q,\cdot)\right|_{\Gamma_i(\p_i+t\q_i)} & = \q_{i3} \mat{ 0 & -1 \\ 1 & 0 }
          \text{.}
        \end{aligned}
      \end{align}
    \end{lemma}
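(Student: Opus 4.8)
The plan is to realize $\cV$ by gluing together, with smooth spatial cutoffs, the global affine rigid-motion velocity fields associated to the individual particles. The starting observation is that the two prescribed conditions in \eqref{eq:Vconditions} are simply the value and the (spatial) Jacobian of one and the same affine field. For $i \in \{1,\dots,N\}$ I would set
\[
  \cW_i(t,\q,x) := \mat{\q_{i1}\\ \q_{i2}} + \q_{i3}\mat{0 & -1 \\ 1 & 0}\left(x - \mat{\p_{i1}+t\q_{i1} \\ \p_{i2}+t\q_{i2}}\right),
\]
and $\cW_0 := 0$. Differentiating $\varphi(\p_i+t\q_i;y)$ in $t$ and using $R'(\alpha) = \mat{0 & -1 \\ 1 & 0}R(\alpha)$, one checks that $\cW_i$ is exactly the Eulerian velocity of the motion $t \mapsto \varphi(\p_i+t\q_i;\cdot)$ at the current point $x$. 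Each $\cW_i$ is polynomial in $(t,\q)$ and affine in $x$, hence $C^\infty$, with constant spatial Jacobian $D\cW_i = \q_{i3}\mat{0 & -1 \\ 1 & 0}$, which is precisely the second line of \eqref{eq:Vconditions}. Thus $\cW_i$ already fulfils both conditions on all of $\R^2$; the only reason not to take $\cV=\cW_i$ outright is that distinct particles, and the outer boundary where the field must vanish, prescribe incompatible fields, so a localization is needed.

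The main step, and the principal obstacle, is geometric: I need a fixed (that is, $t$- and $\q$-independent) family of pairwise disjoint neighborhoods in which to switch between the $\cW_i$. Since $\p \in \Lambda^\circ$, the sets $\overline{B_i(\p_i)}$ are pairwise disjoint compacta contained in $\Omega^\circ$, so there is $\delta > 0$ bounding from below all pairwise distances between the $\overline{B_i(\p_i)}$ as well as the distance of each to $\partial\Omega$. Using continuity of $(\q,t,y) \mapsto \varphi(\p_i+t\q_i;y)$ together with compactness of $[0,1]\times\Gamma_i^0$, I would then choose a bounded open ball $\cB \ni 0$ small enough that for every $\q \in \overline{\cB}$ and $t \in [0,1]$ the curve $\Gamma_i(\p_i+t\q_i)$ stays within the $\delta/3$-neighborhood of $\Gamma_i(\p_i)$. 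Consequently the compact swept sets $K_i := \bigcup_{t \in [0,1],\, \q \in \overline{\cB}} \Gamma_i(\p_i+t\q_i)$ are pairwise disjoint with pairwise distance at least $\delta/3$, and each lies at distance at least $\delta/3$ from $\partial\Omega$.

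With this separation secured the remainder is routine bookkeeping. By a standard Urysohn/mollification argument I would pick cutoffs $\chi_i \in C^\infty(\R^2,[0,1])$, $i=1,\dots,N$, with $\chi_i \equiv 1$ on an open neighborhood of $K_i$ and $\operatorname{supp}\chi_i$ contained in pairwise disjoint open sets whose closures avoid both $\partial\Omega$ and the other $K_j$; such sets exist precisely because the $K_i$ are separated by a positive distance. I then define
\[
  \cV(t,\q,x) := \sum_{i=1}^N \chi_i(x)\,\cW_i(t,\q,x).
\]

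Finally I would verify the three assertions of the lemma. Smoothness: each summand $\chi_i\cW_i$ is $C^\infty$ in $(t,\q,x)$, hence $\cV \in C^m$. Lipschitz continuity: since every $\chi_i$ has compact support and $\cB$ is bounded, each $\chi_i\cW_i$ has bounded gradient on $[0,1]\times\cB\times\R^2$, so the finite sum $\cV$ is globally Lipschitz, in particular Lipschitz in $x$ uniformly in $(t,\q)$ as required by \cref{thm:propertiesODE}. The jet conditions: on a neighborhood of $\Gamma_i(\p_i+t\q_i)\subseteq K_i$ one has $\chi_i \equiv 1$ (so $D\chi_i = 0$) while $\chi_j \equiv 0$ for $j \neq i$, whence $\cV = \cW_i$ on an open set containing the curve and therefore $\cV$ and $D\cV$ agree with $\cW_i$ and $D\cW_i$ there, giving exactly \eqref{eq:Vconditions}; near $\partial\Omega = \Gamma_0$ all $\chi_i$ vanish, so $\cV \equiv 0$ and $D\cV \equiv 0$, matching the $i=0$ case since $\q_0 = 0$. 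The only delicate point is the choice of $\cB$ in the separation step; everything after it is verification.
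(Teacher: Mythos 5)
Your construction is correct, but it takes a genuinely different route from the paper. The paper's proof is a one-line appeal to the Whitney extension theorem (\cref{thm:whitney}): the prescribed zeroth- and first-order data on the closed, pairwise disjoint union of curves are extended to a $C^m$ field on all of $\R^2$. You avoid Whitney entirely by observing that each particle's prescribed jet is the restriction of a globally defined affine field $\cW_i$ (the Eulerian velocity of the rigid motion $t\mapsto\varphi(\p_i+t\q_i;\cdot)$, as your computation with $R'(\alpha)=\left(\begin{smallmatrix}0&-1\\1&0\end{smallmatrix}\right)R(\alpha)$ confirms) and then gluing these fields with $(t,\q)$-independent cutoffs supported near the swept compacta $K_i$. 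Both arguments hinge on the same geometric fact, namely that for $\p\in\Lambda^\circ$ and $\cB$ small enough the curves and their sweeps stay uniformly separated from each other and from $\partial\Omega$; indeed the paper's proof explicitly remarks that it uses the disjointness of the $\Gamma_i$ and that the right-hand sides of \eqref{eq:Vconditions} extend smoothly to $\R^2$, which is exactly the observation you build on. Your version buys two things: it is constructive, which is what is actually wanted later when a numerically accessible $V$ satisfying \eqref{eq:V0conditions} must be built, and it yields joint $C^\infty$ regularity in $(t,\q,x)$ for free, whereas applying Whitney for each fixed $(t,\q)$ only gives smoothness in $x$ and leaves the joint regularity (and the Lipschitz bound uniform in $(t,\q)$) to be argued separately. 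What Whitney buys in exchange is generality --- it would still apply if the prescribed boundary data did not arise from globally defined smooth fields --- but that generality is not needed here. Your argument is complete as stated.
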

    
    \begin{proof}
      This is a consequence of the Whitney extension theorem, see Appendix \cref{thm:whitney}.
      It uses the fact that the $\Gamma_i$ are pairwise disjoint and that the
      right-hand-sides in \eqref{eq:Vconditions} smoothly extend to $\R^2$.
    \end{proof}

    Next we show that $\cX(\q)$ is indeed a trace preserving
    diffeomorphism from $\Omega(\p)$ to $\Omega(\p + \q)$.

    \begin{lemma}\label{thm:XExistence}
      Let $\cV$ be as in \cref{thm:existenceCV} for $m \geq 2$ and let $\cX$
      as in \cref{thm:propertiesODE} be induced by $\cV$.
      Then $\cX(0,\cdot) = \operatorname{id}_{\R^2}$, $\cX(\q,\Omega(\p)) = \Omega(\p+\q)$,
      and for all $u \in H^2(\Omega(\p))$ holds
      \begin{align}
        \label{eq:tracePreservation}
        T(\p) u = T(\p+\q)(u \circ \cX(\q)^{-1})\text{.}
      \end{align}
    \end{lemma}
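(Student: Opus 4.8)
The plan is to reduce every assertion to one dynamical fact: the flow $\eta$ defining $\cX$ moves each interface point along the prescribed rigid motion. Write $J := \mat{0 & -1 \\ 1 & 0}$, so that $R'(\alpha) = J\,R(\alpha)$ and $\exp(\theta J) = R(\theta)$. Fix $i \in \{0,\dots,N\}$ (with the convention $\q_0 = 0$) and $y \in \Gamma_i^0$, and consider $\zeta(t) := \varphi(\p_i + t\q_i; y)$, which by \eqref{eq:transRotation} lies on $\Gamma_i(\p_i + t\q_i)$ for all $t$. Differentiating and substituting $R(\p_{i3}+t\q_{i3})y = \zeta(t) - (\p_{i1}+t\q_{i1},\p_{i2}+t\q_{i2})^T$ shows that $\dot\zeta(t)$ equals exactly the value prescribed for $\cV(t,\q,\cdot)$ on $\Gamma_i(\p_i+t\q_i)$ in the first line of \eqref{eq:Vconditions}. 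Since $\zeta(0) = \varphi(\p_i;y)$, the uniqueness asserted in \cref{thm:propertiesODE} gives $\eta(t,\q,\varphi(\p_i;y)) = \varphi(\p_i+t\q_i;y)$, and at $t=1$ this reads $\cX(\q,\varphi(\p_i;y)) = \varphi(\p_i+\q_i;y)$. Hence $\cX(\q)$ maps $\Gamma_i(\p_i)$ bijectively onto $\Gamma_i(\p_i+\q_i)$, and for $i=0$ it fixes $\partial\Omega$.

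The identity $\cX(0,\cdot)=\operatorname{id}_{\R^2}$ follows because both right-hand sides in \eqref{eq:Vconditions} vanish at $\q=0$; since the extension in \cref{thm:existenceCV} is built from data that is homogeneous in $\q$, one has $\cV(t,0,\cdot)\equiv 0$, so the constant curve solves the defining ODE. For $\cX(\q,\Omega(\p))=\Omega(\p+\q)$ I would shrink $\cB$ so that $\p+\q\in\Lambda^\circ$ and use that, by \cref{thm:propertiesODE}, $\cX(\q)$ is a global $C^m$-diffeomorphism of $\R^2$. A homeomorphism of the plane carrying the disjoint Jordan curves $\partial\Omega,\Gamma_1(\p_1),\dots,\Gamma_N(\p_N)$ onto $\partial\Omega,\Gamma_1(\p_1+\q_1),\dots,\Gamma_N(\p_N+\q_N)$ maps each enclosed region $B_i(\p_i)$ onto $B_i(\p_i+\q_i)$ and the outer domain $\Omega$ onto itself; removing the former from the latter yields the claim.

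The trace identity \eqref{eq:tracePreservation} is the real work, and I expect the normal-derivative component to be the main obstacle. Set $v := u\circ\cX(\q)^{-1}$. The height component is immediate: for $z = \varphi(\p_i+\q_i;y)$ the first paragraph gives $\cX(\q)^{-1}(z) = \varphi(\p_i;y) =: w$, hence $v(z)=u(w)$, so the first rows of $T_i(\p+\q)v$ and $T_i(\p)u$ coincide. For the normal derivative one needs the Jacobian $D\cX(\q)$ on the interface, and this is precisely where the second line of \eqref{eq:Vconditions} enters. By the variational equation \eqref{eq:Xderivativex}, the matrix $\Phi(t):=D_x\eta(t,\q,w)$ solves $\dot\Phi = D\cV(t,\q,\eta(t,\q,w))\,\Phi$ with $\Phi(0)=I$; along the interface trajectory $D\cV(t,\q,\eta(t,\q,w)) = \q_{i3}J$ is constant in $t$, whence $\Phi(1)=\exp(\q_{i3}J)=R(\q_{i3})$, that is $D\cX(\q)(w)=R(\q_{i3})$.

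It then remains to combine the chain rule $\nabla v(z) = [D\cX(\q)(w)]^{-T}\nabla u(w) = R(\q_{i3})\nabla u(w)$ with the elementary fact that the outer unit normals of the rigidly rotated interfaces satisfy $\nu(z)=R(\q_{i3})\nu(w)$. Orthogonality of $R(\q_{i3})$ then gives $\partial_\nu v(z) = \nu(z)^T\nabla v(z) = \nu(w)^T R(\q_{i3})^T R(\q_{i3})\nabla u(w) = \partial_\nu u(w)$, so the second rows agree as well; the case $i=0$ is trivial since $R(0)=I$. Finally I would justify these pointwise manipulations first for smooth $u$ and then extend to all $u\in H^2(\Omega(\p))$ by density, using that composition with the $C^m$ ($m\ge 2$) diffeomorphism $\cX(\q)^{-1}$ is bounded on $H^2$ (which also yields $v\in H^2(\Omega(\p+\q))$) and that the trace operators in \eqref{eq:traceOperator} are continuous.
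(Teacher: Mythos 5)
Your proposal is correct and follows essentially the same route as the paper: identify the flow restricted to each interface with the rigid motion $t\mapsto\varphi(\p_i+t\q_i;\cdot)$ via uniqueness for the ODE, compute $D\cX(\q)=R(\q_{i3})$ on the interface from the variational equation \eqref{eq:Xderivativex}, and combine the chain rule with $\nu(z)=R(\q_{i3})\nu(w)$ and orthogonality of $R$ to get both trace components. In fact you spell out two computations the paper only asserts (that $\zeta(t)=\varphi(\p_i+t\q_i;y)$ solves the flow ODE, and that the fundamental matrix is $\exp(\q_{i3}J)=R(\q_{i3})$ because $D\cV$ is constant along interface trajectories), and you add the density/continuity step needed to pass from smooth $u$ to $H^2$. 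The one genuine divergence is the identity $\cX(\q,\Omega(\p))=\Omega(\p+\q)$: the paper argues that an interior trajectory escaping $\Omega(\p+\q)$ would have to meet a boundary trajectory in space--time, contradicting uniqueness (a step the authors themselves flag as not fully justified), whereas you extend the global diffeomorphism $\cX(\q)$ to the one-point compactification and invoke the Jordan curve theorem to match bounded complementary components of the interfaces; your version is at least as clean, provided you note that $\cX(\q)$ is onto $\R^2$ (the time-reversed flow is globally defined) and hence proper, so the extension to $S^2$ fixing $\infty$ is legitimate.
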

    
    \begin{proof}
      From $\cV(t,0,\cdot) = 0$ follows immediately that
      $\cX(0,x) = \eta(1,0,x) = x$, \ie $\cX(0) = \operatorname{id}_{\R^2}$.
      
      We now prove \eqref{eq:tracePreservation}.
      First we show that $\cX$ preserves the boundaries.
      Let $i \in \{1,\dots,N\}$ and $x \in \Gamma_i(\p_i)$
      and define $\sigma(t) \colonequals \varphi(\p_i + t \q_i; \varphi^{-1}(\p_i;x))$.
      We will show that $\eta(t,\q,x) = \sigma(t)$ solves the ODE
      with right-hand-side $\cV$ from Lemma~\ref{thm:existenceCV}.
      To this end we will make use of the properties
      \begin{align*}
        R(\alpha+\beta) &= R(\alpha) R(\beta), &
        R(\alpha)^{-1} &= R(-\alpha),&
        R'(\alpha) &= \mat{0 & -1 \\ 1 & 0} R(\alpha)
      \end{align*}
      of the rotation matrix $R$.
      Inserting the definition of $\varphi$ and $\varphi^{-1}$ into $\sigma(t)$ we get
      \begin{align*}
        \sigma(t)
          &= R(t\q_{i3})\left(x- \mat{\p_{i1} \\ \p_{i2}} \right)
          + \mat{\p_{i1} + t\q_{i1} \\ \p_{i2} + t\q_{i2}}.
      \end{align*}
      Since we have $\sigma(t) \in \Gamma_i(\p_i + t\q_i)$
      by construction, we can use the formula~\eqref{eq:Vconditions}
      for $\cV$ on $\Gamma_i(\p_i + t\q_i)$.
      Inserting $\sigma(t)$ then gives
      \begin{align*}
        \cV(t,\q,\sigma(t))
          &= \mat{ \q_{i1} \\ \q_{i2}} + \q_{i3} \mat{ 0 & -1 \\ 1 & 0 } \left( \sigma(t) - \mat{\p_{i1}+t\q_{i1} \\ \p_{i2}+t\q_{i2}} \right)\\
          &= \mat{ \q_{i1} \\ \q_{i2}} + \q_{i3} \mat{ 0 & -1 \\ 1 & 0 } R(t\q_{i3})\left(x- \mat{\p_{i1} \\ \p_{i2}} \right)
          = \frac{\partial \sigma}{\partial t} (t).
      \end{align*}
      Hence $\eta(t,\q,x) = \sigma(t)$ solves the ODE and we have
      \begin{align}
        \label{eq:helper1709211348}
        \cX(\q,x) = \sigma(1) = \varphi\left( \p_i+\q_i; \varphi^{-1}(\p_i;x)\right)\text{.}
      \end{align}
      Recalling the convention $\q_0 \colonequals \p_0 \colonequals 0$, this is also true for $i=0$.

      In particular holds $\cX(\q,\Gamma_i(\p_i)) = \Gamma_i(\p_i+\q_i)$.
      Another immediate consequence of \eqref{eq:helper1709211348} is -- now
      recalling the definition of the rotation matrix $R$ in \eqref{eq:rotationMatrix} -- that
      \begin{align}
        \label{eq:helper1709211410}
        \nu|_{\Gamma_i(\p_i+\q_i)}(\cX(\q,x))  = R(\q_{i3})\nu|_{\Gamma_i(\p_i)}(x)
      \end{align}
      for $x \in \Gamma_i(\p_i)$.
      And, similarly, using the properties \eqref{eq:Vconditions} of $\cV$
      also allows us to compute $\frac{\partial X}{\partial x}(\q,x)$
      on $\Gamma_i(\p_i)$ from solving the ODE \eqref{eq:Xderivativex}.
      The result is
      \begin{align}
        \label{eq:helper1709211435}
        D\cX(\q,x)  = R(\q_{i3})\text{.}
      \end{align}
      Now, let $\wt{u} \colonequals u \circ \cX(\q)^{-1}$.
      From \eqref{eq:helper1709211348} we infer for $x \in \Gamma_i(\p_i+\q_i)$ that
      \begin{align}
        \label{eq:helper1709211458}
        \wt{u}(x) = u\left(\varphi(\p_i; \varphi^{-1}(\p_i+\q_i;x) \right)\text{.}
      \end{align}
      Also, from \eqref{eq:helper1709211410} and \eqref{eq:helper1709211435}
      we infer for $x \in \Gamma_i(\p_i+\q_i)$ that
      \begin{align}
        \label{eq:helper1709211459}
        \begin{aligned}
          \partial_\nu \wt{u}(x)
          & = Du( \cX^{-1}(\q;x) ) \frac{\partial \cX^{-1}(\q;x)}{\partial x} \nu|_{\Gamma_i(\p_i+\q_i)}(x)
          \\ & = Du( \cX^{-1}(\q;x) ) R(-\q_{i3}) R(\q_{i3})\nu|_{\Gamma_i(\p_i)}(\cX^{-1}(\q;x))
          \\ & = \partial_\nu u(\cX^{-1}(\q;x))
          \\ & = \partial_\nu u\left(\varphi(\p_i;\varphi^{-1}(\p_i+\q_i;x))\right)
          \text{.}
        \end{aligned}
      \end{align}
      Recalling the definition of the trace operators, \eqref{eq:traceOperator},
      we have for almost-every $x \in \Gamma_i^0$ by \eqref{eq:helper1709211458}
      \begin{align*}
        T_{i1}(\p+\q)\wt{u}(x)
        & = \wt{u}( \varphi(\p_i+\q_i;x) )
        = u\left(\varphi(\p_i;x)\right)
        = T_{i1}(\p)u(x)
      \end{align*}
      and by \eqref{eq:helper1709211459}
      \begin{align*}
        T_{i2}(\p+\q)\wt{u}(x)
        & = \partial_\nu \wt{u}( \varphi(\p_i+\q_i;x) )
        = \partial_\nu u\left( \varphi(\p_i;x) \right)
        = T_{i2}(\p)u(x)\text{.}
      \end{align*}
      Altogether this proves equation \eqref{eq:tracePreservation}.
      
      In order to show the equality $\cX(\q,\Omega(\p)) = \Omega(\p+\q)$ we define the set
      \begin{align*}
        \cZ = \left\{ (t,\eta(t,\q,x)) \mid x \in \partial\Omega(\p) \right\}\text{.}
      \end{align*}
      Now let $x_0 \in \Omega(\p)^\circ$ and assume that
      $\cX(\q,x_0) = \eta(1,\q,x_0) \notin \Omega(\p+\q)$.
      By continuity of $\cX$ this would imply that there exists a $\hat{t} \in [0,1]$
      such that $\eta(\hat{t},\q,x_0) \in \cZ$ and therefore, by definition of $\cZ$,
      there would exist a $x_1 \in \partial\Omega(\p)$ such that
      $\eta(\hat{t},\q,x_0) = \eta(\hat{t},\q,x_1)$.
      As of $x_0 \neq x_1$ this would be a contradiction to the uniqueness of $\eta$.
    \end{proof}

    \begin{lemma}\label{thm:PhiExistence}
      Let $\cX$ be as in \cref{thm:XExistence}.
      Then for all $\q \in \cB$ the map
      \begin{align*}
        \Phi(\q) \colon U(\p) & \longrightarrow U(\p+\q)
        ,\qquad u \longmapsto u \circ \cX(\q)^{-1}
      \end{align*}
      is well-defined and an isomorphism.
    \end{lemma}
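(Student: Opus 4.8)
The plan is to treat $\Phi(\q)$ as the restriction to $U(\p)$ of the linear composition operator $u \mapsto u \circ \cX(\q)^{-1}$ acting between the spaces $H^2(\Omega(\p))$ and $H^2(\Omega(\p+\q))$, and to split the argument into well-definedness and the construction of a two-sided inverse. Two ingredients drive everything: the trace-preservation identity \eqref{eq:tracePreservation} from \cref{thm:XExistence}, which transfers the linear boundary constraints defining $U$, and the fact that precomposition with a $C^m$-diffeomorphism ($m \geq 2$) is a bounded linear operator between the corresponding $H^2$-spaces.

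For well-definedness I would fix $u \in U(\p)$. Since $\cX(\q)$ is a $C^m$-diffeomorphism with $\cX(\q,\Omega(\p)) = \Omega(\p+\q)$ by \cref{thm:XExistence}, the second-order chain rule together with the change-of-variables formula shows $\Phi(\q)u = u \circ \cX(\q)^{-1} \in H^2(\Omega(\p+\q))$: the weak derivatives of the composition up to order two are finite sums of pulled-back derivatives of $u$ (up to order two) multiplied by derivatives of $\cX(\q)^{-1}$ (up to order two), and all factors are bounded. For the boundary constraint, \eqref{eq:tracePreservation} gives $T(\p+\q)(\Phi(\q)u) = T(\p)u$, so that
\begin{align*}
  P\bigl(T(\p+\q)\Phi(\q)u - g\bigr) = P\bigl(T(\p)u - g\bigr) = 0,
\end{align*}
using $u \in U(\p)$; hence $\Phi(\q)u \in U(\p+\q)$.

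It then remains to exhibit an inverse. I would define $\Psi(\q)v := v \circ \cX(\q)$ for $v \in U(\p+\q)$, which lands in $H^2(\Omega(\p))$ by the same composition argument applied to $\cX(\q)$. Applying \eqref{eq:tracePreservation} with $u = \Psi(\q)v$ yields $T(\p)(\Psi(\q)v) = T(\p+\q)\bigl(\Psi(\q)v \circ \cX(\q)^{-1}\bigr) = T(\p+\q)v$, so that $P(T(\p)\Psi(\q)v - g) = P(T(\p+\q)v - g) = 0$ and thus $\Psi(\q)v \in U(\p)$. Since $\cX(\q)^{-1} \circ \cX(\q) = \operatorname{id}$ and $\cX(\q) \circ \cX(\q)^{-1} = \operatorname{id}$, one obtains $\Phi(\q)\Psi(\q) = \operatorname{id}$ and $\Psi(\q)\Phi(\q) = \operatorname{id}$, so $\Phi(\q)$ is bijective onto $U(\p+\q)$ with inverse $\Psi(\q)$.

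The only nonroutine point, and the step I expect to be the main obstacle, is the $H^2$-boundedness of the two composition operators, which is precisely what upgrades the bijection to an isomorphism. This rests on the second-order chain rule once one has uniform bounds on the first and second derivatives of $\cX(\q)^{\pm 1}$; these are available because $\cX(\q)$ is $C^m$ with $m \geq 2$ by \cref{thm:propertiesODE}, with its derivatives characterized there through linear ODEs whose coefficients are controlled by the Lipschitz and $C^m$ bounds on $\cV$. Consequently $\Phi(\q)$ and $\Psi(\q) = \Phi(\q)^{-1}$ are bounded linear maps, so that $\Phi(\q)$ is an isomorphism between the feasible membrane sets $U(\p)$ and $U(\p+\q)$.
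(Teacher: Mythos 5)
Your proposal is correct and follows essentially the same route as the paper: both reduce the statement to (a) the fact that precomposition with the $C^2$-diffeomorphism $\cX(\q)^{\pm 1}$ is an isomorphism between the ambient $H^2$-spaces and (b) the trace-preservation identity \eqref{eq:tracePreservation} to identify the image with $U(\p+\q)$. The only difference is that the paper outsources step (a) to a standard Sobolev-space composition theorem (Adams, Theorem~3.35), after noting that compactness of $\overline{\Omega(\p)}$ lets one assume $\abs{\det D\cX(\q,x)}$ is uniformly bounded above and away from zero, whereas you sketch the chain-rule argument directly.
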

    
    \begin{proof}
      From \cref{thm:XExistence} we know for every $\q \in \cB$ that the
      restriction $\cX(\q,\cdot)|_{\Omega(\p)}$ is a $2$-diffeomorphism onto
      $\Omega(\p+\q)$.  Because $\overline{\Omega(\p)}$ is compact we can
      assume without loss of generality that $\det \frac{\partial}{\partial x} \cX(\q,x)$
      is uniformly bounded, \ie that there exists a $c \in \R_{>0}$
      such that for all $\q \in \cB$ and $x \in \Omega(\p)$ holds
      $c \leq \abs{\det \frac{\partial}{\partial x} \cX(\q,x)} \leq \frac{1}{c}$.
      Elsewise we may replace $\cB$ by an appropriate sub-neighborhood.  Hence,
      \cite[Theorem 3.35]{Adams75}
      is applicable and the map
      \begin{align*}
        \wt{\Phi}(\q)\colon H^2(\Omega(\p)) & \longrightarrow H^2(\Omega(\p+\q))
        ,\qquad u \longmapsto u \circ \cX(\q)^{-1}
      \end{align*}
      is well-defined and an isomorphism, and in particular also the restriction
      $\Phi(\q) = \wt{\Phi}(\q)|_{U(\p)}$ is well-defined and an isomorphism
      onto its image.
      
      It remains to show that $\operatorname{range}(\Phi(\q)) = U(\p+\q)$.
      Suppose $u \in U(\p)$ and $\wt{u} \in U(\p+\q)$.
      Because of the trace preserving property \eqref{eq:tracePreservation}
      and by definition of $U(\p)$ and $U(\p+\q)$ it follows that
      $\Phi(\q)u = u \circ \cX(\q)\in U(\p+\q)$ and
      $\Phi(\q)^{-1} \wt{u} = \wt{u} \circ \cX(\q) \in U(\p)$, and so
      $\operatorname{range}(\Phi(\q)) = U(\p+\q)$.
    \end{proof}

  \subsection{Differentiability}
  
    In this part we use the maps $\cX$ from \cref{thm:XExistence} and $\Phi$
    from \cref{thm:PhiExistence} to transform the domain of definition for
    the functions $J(\p+\q)$ from $U(\p+\q)$ to $U(\p)$.
    Afterwards we apply the implicit function theorem to derive a differentiability result.
    
    For $\q \in \cB$ and $u \in U(\p)$ the \emph{transformed energy} is defined as
    \begin{align}
      \label{eq:transformedEnergy}
      \hat{J}(\q,u) \colonequals J(\p+\q,\Phi(\q,u))\text{,}
    \end{align}
    and the \emph{transformed reduced interaction energy} is
    \begin{align*}
      \hat{\cJ}(\q) \colonequals \min_{v \in U(\p)} \hat{J}(\q,v)
      \text{.}
    \end{align*}
    
    As in Lemma~\ref{lemma:ellipticity}
    we write the affine linear subspace $U(\p)$ as $U(\p) = U_0 + \hat{g}$
    where $U_0 \subseteq H^2(\Omega(\p))$ is a linear subspace and
    $\hat{g} \in H^2(\Omega(\p))$ is a function such that $T(\p)\hat{g} = g$.

    For notational convenience we define $D^k \colonequals \frac{\partial^k}{\partial x^k}$
    to be the differential with respect to the spatial coordinates.
    
    \begin{lemma}
      Let $\q \in \cB$ and define
      \begin{align*}
        \cA(\q,x) \colonequals \abs{\det D\cX(\q,x)} \, (D\cX(\q,x))^{-1}  (D\cX(\q,x))^{-T}
        \text{.}
      \end{align*}
      It holds
      \begin{align}
        \label{eq:transformedIntegrand}
        \hat{J}(\q,u) = \frac{1}{2}\int_{\Omega(\p)} \kappa \frac{\div\left( \cA(\q) \nabla u\right)^2}{\abs{\det D\cX(\q)}} + \sigma \norm{\nabla u}_{\cA(\q)}^2 \d{x}
      \end{align}
      and $\hat{J}_u \in C^{m-2}(\cB \times U(\p), U_0')$.
    \end{lemma}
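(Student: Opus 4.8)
The plan is to establish the integral representation \eqref{eq:transformedIntegrand} by a change of variables along $\cX(\q)$ and then to read off the regularity of $\hat{J}_u$ from the coefficients that appear. First I would unfold the definitions: by \eqref{eq:transformedEnergy} and the definition of $\Phi$ we have $\hat{J}(\q,u) = J(\Omega(\p+\q),\wt{u})$ with $\wt{u} := u \circ \cX(\q)^{-1}$, and by \cref{thm:XExistence} the map $\cX(\q)$ is a diffeomorphism of $\Omega(\p)$ onto $\Omega(\p+\q)$. Writing $F := D\cX(\q)$ and $j := \abs{\det F}$, the substitution $y = \cX(\q,x)$ with $\d{y} = j \, \d{x}$ transports the integral over $\Omega(\p+\q)$ back to $\Omega(\p)$. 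For the surface-tension term the chain rule gives $\nabla_y \wt{u} = F^{-T}\nabla_x u$, so that $\norm{\nabla_y \wt{u}}^2 \, j = \nabla u^{T}\bigl(j F^{-1}F^{-T}\bigr)\nabla u = \norm{\nabla u}_{\cA(\q)}^2$, which already reproduces the second summand.

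The heart of the matter is the bending term $\int (\Delta_y \wt{u})^2 \d{y}$, for which I would prove the pointwise identity
\begin{align*}
  \Delta_y \wt{u} = \frac{1}{j}\,\div\bigl(\cA(\q)\nabla u\bigr) \qquad (y = \cX(\q,x)).
\end{align*}
Since for $m \geq 2$ both sides lie in $L^2$ (the left because $\wt{u} \in H^2$, the right because $\cA(\q) \in C^1$ and $u \in H^2$), it suffices to test against $\phi \in C_c^\infty(\Omega(\p+\q)^\circ)$ and its pullback $\psi := \phi \circ \cX(\q)$. Integrating by parts in $y$, substituting, using $\nabla_y\phi = F^{-T}\nabla_x\psi$ together with $\d{y} = j\,\d{x}$, and integrating by parts back in $x$ turns $\int \Delta_y\wt{u}\, \phi\, \d{y}$ into $\int \div(\cA(\q)\nabla u)\,\psi\,\d{x}$, which is exactly the weak form of the claimed identity; the boundary terms vanish because $\phi$ and hence $\psi$ are compactly supported in the interior. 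Squaring and multiplying by $\d{y} = j\,\d{x}$ then gives $(\Delta_y\wt{u})^2\, \d{y} = j^{-1}\bigl(\div(\cA(\q)\nabla u)\bigr)^2\,\d{x}$, yielding the first summand and hence \eqref{eq:transformedIntegrand}. The hard part will be getting this divergence-form transformation of the Laplacian correct; carrying it through in the weak formulation rather than differentiating the chain rule twice avoids the unwieldy second-derivative terms.

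For the regularity statement I would first observe that, for fixed $\q$, the integrand in \eqref{eq:transformedIntegrand} is purely quadratic in $u$, so $\hat{J}_u(\q,u)$ is \emph{linear} in $u$ and given by the symmetric bilinear form
\begin{align*}
  \hat{J}_u(\q,u)[v] = \int_{\Omega(\p)} \kappa\,\frac{\div(\cA(\q)\nabla u)\,\div(\cA(\q)\nabla v)}{j} + \sigma\, \nabla u^{T} \cA(\q)\nabla v \d{x},
\end{align*}
which is trivially $C^\infty$ in $u$. It then remains to track the $\q$-regularity of the coefficients. By \cref{thm:propertiesODE} we have $\cX \in C^m(\cB\times\R^2,\R^2)$, hence $F = D\cX \in C^{m-1}$; since $\cX(\q)$ is a diffeomorphism, $j$ is bounded away from $0$ on the compact set $\overline{\Omega(\p)}$, so $F^{-1}$, $j$ and $1/j$ are $C^{m-1}$ in $(\q,x)$, and therefore $\cA(\q) \in C^{m-1}$.

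The surface-tension part of $\hat{J}_u$ uses only $\cA(\q)$ itself and is thus $C^{m-1}$ in $\q$, whereas the bending part involves $\div(\cA(\q)\nabla u)$ and hence one spatial derivative of $\cA(\q)$, which is only $C^{m-2}$ in $(\q,x)$. This is the weakest link and pins the joint regularity at $C^{m-2}$. Since the above bilinear form depends continuously (indeed boundedly) on its coefficients measured in $L^\infty(\Omega(\p))$, and these coefficients are $C^{m-2}$ in $\q$ uniformly in $x$, estimating the form in the $H^2(\Omega(\p))$-operator norm yields $\hat{J}_u \in C^{m-2}(\cB\times U(\p),U_0')$, as claimed.
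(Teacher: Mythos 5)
Your proposal is correct and follows essentially the same route as the paper: the integral identity is exactly the content of the paper's appendix result (Lemma \ref{thm:transformationDerivatives}), whose proof is the same weak-formulation/integration-by-parts argument you give for transforming the Laplacian, and the regularity statement is likewise obtained by tracking that $D\cX$ and its spatial derivatives together with $1/\abs{\det D\cX}$ (bounded below on the compact closure of $\Omega(\p)$, after shrinking $\cB$) leave the coefficients of the bilinear form $C^{m-2}$ in $\q$. Your bookkeeping of where the regularity drops to $m-2$ (the spatial derivative of $\cA$ entering through the divergence) is, if anything, slightly more explicit than the paper's.
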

    
    \begin{proof}
      Equation \eqref{eq:transformedIntegrand} is a direct application of
      \cref{thm:transformationDerivatives} applied to $X = \cX(\q)$.
      
      Furthermore, for all $v \in H^2(\Omega)$ we have
      \begin{align*}
        \hat{J}_u(\q,u;v)
        & = \int_{\Omega(\p)} \kappa \frac{\div(\cA(\q)\nabla u) \, \div(\cA(\q) \nabla v)}{\abs{\det D\cX(\q)}} + \sigma \cA(\q)\nabla u \cdot \nabla v \d{x}
        \text{.}
      \end{align*}
      As of $\cX \in C^m$ we know that $D\cX(\q)$ and $D^2\cX(\q)$ are both
      $(m-2)$ times continuously differentiable.  Because the $\cX(\q)$ are
      diffeomorphisms with $\cX(0)$ and because $\cX$ is continuous we have
      $\det(D\cX(\q)) > 0$.  Because $\overline{\Omega(\p)}$ is compact, we can
      assume without loss of generality that there exists a $c \in \R_{>0}$
      such that $\det(D\cX(\q)) > c$, else we replace $\cB$ by a suitable
      sub-neighborhood.  Consequently, the integrand of $\hat{J}_u$ is $(m-2)$
      times differentiable with respect to $\q$.  Moreover, the integrand is
      even smooth with respect to $u$ and hence application of the dominated
      convergence theorem yields $\hat{J}_u \in C^{m-2}(\cB \times U(\p), U_0')$.
    \end{proof}

    \begin{lemma}
      There exists a neighborhood $\hat{\cB}$ of $0 \in \R^{N\times 3}$
      such that $\cJ \in C^{m-2}(\hat{\cB}+\p)$
      and for all $\q \in \hat{\cB}$ and multi-indices $\alpha$ with
      $\abs{\alpha} \leq m-2$ holds
      \begin{align*}
        \frac{\partial^\alpha}{\partial \p^\alpha} \cJ(\p+\q)
        = \frac{\partial^\alpha}{\partial \q^\alpha} \hat{\cJ}(\q)\text{.}
      \end{align*}
      In particular, if $m \geq 3$ and $u = \argmin_{v \in U(\p)} J(\p,v)$ then
      \begin{align}
        \label{eq:firstDerivative}
        \dd{\p} \cJ(\p) = \dd{\q} \hat{J}(0,u)
        \text{.}
      \end{align}
    \end{lemma}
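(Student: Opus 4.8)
The plan is to reduce the whole statement to the transformed reduced energy $\hat{\cJ}$ and then analyze $\hat{\cJ}$ by the implicit function theorem. The first, essentially bookkeeping, step is to observe that $\cJ(\p+\q) = \hat{\cJ}(\q)$ for $\q$ in a small enough neighborhood of $0$ (so that $\p+\q \in \Lambda^\circ$). Indeed, by \cref{thm:PhiExistence} the map $\Phi(\q)$ is an isomorphism from $U(\p)$ onto $U(\p+\q)$, so substituting $w = \Phi(\q)v$ in the minimization gives
\begin{align*}
  \hat{\cJ}(\q) = \min_{v \in U(\p)} J(\p+\q, \Phi(\q)v) = \min_{w \in U(\p+\q)} J(\p+\q, w) = \cJ(\p+\q).
\end{align*}
Hence it suffices to show $\hat{\cJ} \in C^{m-2}$ near $0$ and to compute its derivatives; the asserted identity $\frac{\partial^\alpha}{\partial \p^\alpha}\cJ(\p+\q) = \frac{\partial^\alpha}{\partial \q^\alpha}\hat{\cJ}(\q)$ is then just the translation invariance of differentiation applied to this equality.

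For the regularity I would exploit that $\hat{J}(\q,\cdot)$ is quadratic on the affine space $U(\p) = U_0 + \hat{g}$, so that its minimizer $v(\q)$ is characterized by the first-order condition $\hat{J}_u(\q, v(\q)) = 0$ in $U_0'$. The preceding lemma provides $\hat{J}_u \in C^{m-2}(\cB \times U(\p), U_0')$, and since $\hat{J}$ is quadratic in $u$ the second partial $\hat{J}_{uu}(\q,\cdot)\colon U_0 \to U_0'$ is independent of $u$ and depends $C^{m-2}$-smoothly on $\q$. At $\q = 0$ we have $\cX(0) = \operatorname{id}$, hence $\cA(0) = I$ and $\det D\cX(0) = 1$, so that $\hat{J}(0,\cdot) = J(\p,\cdot)$ and $\hat{J}_{uu}(0,\cdot)$ is exactly the bending bilinear form
\begin{align*}
  (w_1,w_2) \longmapsto \int_{\Omega(\p)} \kappa\, \Delta w_1\, \Delta w_2 + \sigma\, \nabla w_1 \cdot \nabla w_2 \d{x},
\end{align*}
which is coercive on $U_0$ (this is the same coercivity that makes $\cJ(\p)$ well-defined) and hence an isomorphism onto $U_0'$. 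Coercivity also yields global uniqueness of the minimizer, so the branch produced by the implicit function theorem applied to $\hat{J}_u(\q, v) = 0$ is the true minimizer. This gives a neighborhood $\hat{\cB}$ of $0$ and a $C^{m-2}$ map $\q \mapsto v(\q)$, whence $\hat{\cJ}(\q) = \hat{J}(\q, v(\q))$ is a composition of $C^{m-2}$ maps and therefore itself $C^{m-2}$.

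The envelope formula then follows by differentiating $\hat{\cJ}(\q) = \hat{J}(\q, v(\q))$. By the chain rule,
\begin{align*}
  \dd{\q}\hat{\cJ}(\q) = \hat{J}_{\q}(\q, v(\q)) + \hat{J}_u(\q, v(\q))\, \dd{\q}v(\q),
\end{align*}
and the second term vanishes because $\dd{\q}v(\q)$ takes values in $U_0$, on which $\hat{J}_u(\q, v(\q))$ vanishes by the first-order condition. Evaluating at $\q = 0$, where $\Phi(0) = \operatorname{id}$ forces $v(0) = u = \argmin_{v \in U(\p)} J(\p,v)$, yields $\dd{\q}\hat{\cJ}(0) = \dd{\q}\hat{J}(0,u)$, which combined with $\cJ(\p) = \hat{\cJ}(0)$ and the translation identity is precisely \eqref{eq:firstDerivative}.

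I expect the main obstacle to be the regularity step, specifically verifying that $\hat{J}_{uu}(0,\cdot)$ is an isomorphism $U_0 \to U_0'$: this is where the coercivity of the linearized Canham--Helfrich energy on the constrained space $U_0$ must be invoked, and it is the hypothesis that makes the implicit function theorem applicable. Once invertibility at $\q = 0$ is secured, its persistence for small $\q$ and the $C^{m-2}$-dependence of the minimizer are routine.
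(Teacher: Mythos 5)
Your proposal is correct and follows essentially the same route as the paper: reduce to $\hat{\cJ}$ via the isomorphism $\Phi(\q)$, apply the implicit function theorem to the stationarity condition $\hat{J}_u(\q,\cdot)=0$ on $U_0+\hat{g}$ using the Lax--Milgram invertibility of the elliptic second variation at $\q=0$, and conclude the first-derivative formula by the envelope argument. The only (welcome) additions are that you explicitly justify $\cJ(\p+\q)=\hat{\cJ}(\q)$ from \cref{thm:PhiExistence} and that the implicit-function branch coincides with the true minimizer by convexity, two points the paper leaves implicit.
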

    
    \begin{proof}
      Let $\hat{J}_u \colonequals \dd{u} \hat{J}$ and
      $\hat{J}_{uu} \colonequals \frac{\partial^2}{\partial u^2} \hat{J}$.
      Define
      \begin{align*}
        F\colon \cB \times U_0 & \longrightarrow U_0'
        ,\qquad (\q,v) \longmapsto \hat{J}_u(\q,v + \hat{g})
        \text{.}
      \end{align*}
      Suppose that $u \in U(\p)$ is the unique solution of $\min_{v \in U(\p)} J(\p,v)$,
      and define $\hat{u} \colonequals u - \hat{g}$.
      Then by \eqref{eq:transformedEnergy} and because of $\Phi(0)u = u$ it also follows
      that $u$ is the unique minimizer of $\hat{J}(0,\cdot)$ over $U(\p)$ and therefore
      and therefore
      \begin{align*}
        F(0,\hat{u}) = \hat{J}_u(0, \hat{u} + \hat{g}) = \hat{J}_u(0,u) = 0 \in U_0'
        \text{.}
      \end{align*}
      Moreover, for all $v,w \in U_0$ holds
      \begin{align*}
        F_u(0,\hat{u};v,w)
        & = \hat{J}_{uu}(0,u;v,w)
        = \int_{\Omega(\p)} \kappa \Delta v \, \Delta w + \sigma \nabla v \cdot \nabla w \d{x}\text{.}
      \end{align*}
      This defines an elliptic bilinear form over $U_0$ (cf. Lemma~\ref{lemma:ellipticity})
      and hence $F_u(0,\hat{u})$
      is invertible in $U_0$ by virtue of Lax--Milgram's theorem.
      Application of the implicit function theorem, \cref{thm:IFT}, yields a neighborhood
      $\hat{\cB} \subseteq \cB$ of $0$ and a function $\hat{\u} \in C^{m-2}(\hat{\cB}, U_0)$
      such that $\hat{\u}(0) = \hat{u}$ and $F(0,\hat{\u}(\q)) = 0$ for all $\q \in \hat{\cB}$.
      In particular, $\hat{\cJ}(\q) = \hat{J}(\q,\hat{\u}(\q)+\hat{g})$ for all $\q \in \cB$.
      From $\cJ(\p+\q) = \hat{\cJ}(\q)$ we infer
      \begin{align*}
        \frac{\partial^\alpha}{\partial \p^\alpha} \cJ(\p)
        = \frac{\partial^\alpha}{\partial \q^\alpha} \hat{\cJ}(\q)
      \end{align*}
      for all multi-indices $\alpha$ with $\abs{\alpha} \leq m-2$.
      For $m\geq 3$ this in particularly implies
      \begin{align*}
        \dd{\p} \cJ(\p)
        & = \dd{\q} \hat{J}(0,\hat{\u}(0)+\hat{g}) + \hat{J}_{u}(0,\hat{\u}(0)+\hat{g}) \, \dd{\q} \hat{\u}(0)
        = \dd{\q} \hat{J}(0,u)
        \text{.}
      \end{align*}
      
    \end{proof}

  \subsection{A numerically feasible representation of the first derivative}

    In the following paragraphs we discuss a way to derive a numerically feasible
    expression for the first order derivative $\partial_{\e} \cJ(\p)$ of the
    reduced interaction energy $\cJ$ in $\p \in \Lambda^\circ$ in direction of
    an $\e \in \R^{N\times 3}$.

    An important component of the integrand's derivative is $\partial_{\e} \cX(0)$
    and its spatial derivatives.  From \eqref{eq:Xderivativeq} we know
    that this derivative can be evaluated by solving an ODE.  This is not
    practically feasible, however, because those computations would be too
    expensive.  Besides, it also requires knowledge of the vector field
    $\cV$, which may be hard to construct explicitly.  Instead we restrict ourselves
    to a subclass of vector fields in the sense of \cref{thm:existenceCV} for
    which $\partial_{\e} \cX(0)$ is can be computed easily from information
    that is available a-priori.
    
    To this end, suppose a vector field $V \colon \Omega(\p) \rightarrow \R^2$
    such that for $i \in \{0,\dots,N\}$
    \begin{align}
      \label{eq:V0conditions}
      \begin{aligned}
        V|_{\Gamma_i(\p_i)} & = \mat{ \e_{i1} \\ \e_{i2}} + \e_{i3} \mat{ 0 & -1 \\ 1 & 0 } \left( \cdot - \mat{\p_{i1}\\ \p_{i2}} \right)
        \\ DV|_{\Gamma_i(\p_i)} & = \e_{i3} \mat{ 0 & -1 \\ 1 & 0 }
      \end{aligned}
    \end{align}
    where we again use the convention $\e_0 \colonequals 0$.
    Usually it is easy to construct such a $V$ in a way that it is also
    numerically accessible.
    Next we extend this to a vector field $\cV$ such that \eqref{eq:Vconditions}
    is fulfilled, where we can make the simplifying assumption that $\cB$
    is a ball of radius $r \in \R_{>0}$, and such that for all $t \in [0,1]$
    and $\lambda \in (0,r)$ the scaling properties
    \begin{align}
      \label{eq:scalingCondition}
      \begin{aligned}
        \cV(t,\lambda\e,x) & = \frac{\lambda}{r} \cV\left(\frac{\lambda}{r} t,r\e,x\right)
        \\ \cV(0,r\e,x) & = rV(x)
      \end{aligned}
    \end{align}
    hold.

    In view of \eqref{eq:Xderivativeq} and given $x \in \Omega(\p)$,
    we have $\partial_{\e} \cX(0,x) = \eta_{\e}(1,x)$ where $\eta_{\e}(\cdot,x)$ solves the ODE
    \begin{align*}
      \frac{\partial \eta_{\e}}{\partial t}(t,x) & = \partial_{\e} \cV(t,0,\eta(t,0,x)) + D\cV(t,0,\eta(t,0,x)) \eta_{\e}(t,x)
      ,\qquad \eta_{\e}(0,x) = 0\text{.}
    \end{align*}
    As of $\cV(t,0,\cdot) \equiv 0$ we have $\eta(t,0,x) = x$ and $D\cV(t,0,\eta(t,0,x)) = 0$.
    Furthermore, from \eqref{eq:scalingCondition} we are able to conclude
    \begin{align*}
      \partial_{\e} \cV(t,0,x)
      & = \lim_{\lambda\searrow 0} \frac{\cV(t,\lambda\,\e,x) - \cV(t,0,x)}{\lambda}
      = \lim_{\lambda\searrow 0} \frac{\cV\left(\frac{\lambda}{r}\,t,r\e,x\right)}{r}
      = V(x)\text{.}
    \end{align*}
    Therefore, $\eta_{\e}$ is the solution of the ODE
    \begin{align*}
      \frac{\partial \eta_{\e}}{\partial t}(t,x) = V(x), \qquad \eta_{\e}(0,x) = 0\text{,}
    \end{align*}
    which implies $\eta_{\e}(t,x) = t\,V(x)$ and hence also $\partial_{\e} \cX(0) = V$.

    When computing the derivative, we will make use of the following identities
    from matrix calculus.
    \begin{lemma}
      Suppose $M \in C^1( \R^d, \R^{n\times n} )$ and that $M(x)$ is invertible
      for all $x \in \R^d$.
      Then
      \begin{align}
        \dd{x_i} \det(M(x)) & = \det(M) \Tr\left( M(x)^{-1} \dd{x_i} M(x)\right) \label{eq:matCalc1}
        \\ \dd{x_i} M(x)^{-1} & = -M(x)^{-1} \frac{\partial M(x)}{\partial x_i} M(x)^{-1} \label{eq:matCalc2}
        \\ \dd{x_i} \Tr(M(x)) & = \Tr\left(\dd{x_i} M(x) \right) \label{eq:matCalc3}
        \text{.}
      \end{align}
    \end{lemma}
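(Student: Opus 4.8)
The plan is to establish the three identities separately, working from the simplest to the most involved, and in each case to reduce the matrix statement to entry-wise scalar calculus. Throughout, the only standing hypotheses I use are that $M \in C^1(\R^d,\R^{n\times n})$ and that $M(x)$ is invertible for every $x$.

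For \eqref{eq:matCalc3} I would simply invoke linearity of the trace. Since $\Tr(M(x)) = \sum_{k=1}^n M_{kk}(x)$ is a finite sum of entries and differentiation commutes with finite sums, it follows directly that $\dd{x_i}\Tr(M(x)) = \sum_k \dd{x_i} M_{kk}(x) = \Tr(\dd{x_i} M(x))$, requiring nothing beyond the $C^1$ hypothesis. For \eqref{eq:matCalc1}, Jacobi's formula, I would start from the Laplace (cofactor) expansion of the determinant along a fixed row, which exhibits $\det(M)$ as a polynomial in the entries $M_{kj}$ whose partial derivative with respect to $M_{kj}$ is exactly the cofactor $\operatorname{cof}_{kj}(M)$. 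Applying the chain rule to the composition $x \mapsto M(x) \mapsto \det(M(x))$ then gives $\dd{x_i}\det(M) = \sum_{k,j} \operatorname{cof}_{kj}(M)\, \dd{x_i} M_{kj}$. Next I would use the adjugate identity $M^{-1} = \det(M)^{-1}\,\operatorname{adj}(M)$ with $\operatorname{adj}(M)_{jk} = \operatorname{cof}_{kj}(M)$, which is available precisely because $M(x)$ is invertible, to rewrite each cofactor as $\operatorname{cof}_{kj}(M) = \det(M)\,(M^{-1})_{jk}$. Substituting and recognising the resulting double sum as a trace yields $\dd{x_i}\det(M) = \det(M)\sum_{k,j}(M^{-1})_{jk}\,\dd{x_i}M_{kj} = \det(M)\,\Tr(M^{-1}\dd{x_i}M)$, which is \eqref{eq:matCalc1}.

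For \eqref{eq:matCalc2} I would first record that $M^{-1}$ is itself $C^1$: by Cramer's rule each entry of $M^{-1}$ is the ratio of a polynomial in the entries of $M$ (a cofactor) to the nowhere-vanishing function $\det(M)$, hence is $C^1$ as a function of $x$. Granted this, I would differentiate the identity $M(x)M(x)^{-1} = I$ by the product rule to obtain $(\dd{x_i}M)M^{-1} + M\,(\dd{x_i}M^{-1}) = 0$, and then left-multiply by $M^{-1}$ and rearrange to arrive at $\dd{x_i}M^{-1} = -M^{-1}(\dd{x_i}M)M^{-1}$.

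None of the three steps presents a genuine obstacle, as all are standard identities of matrix calculus. The only point that requires care is the preliminary justification, in the proof of \eqref{eq:matCalc2}, that $M^{-1}$ is differentiable before one is entitled to differentiate the relation $MM^{-1}=I$; as noted this follows at once from Cramer's rule together with the invertibility hypothesis, and the remaining manipulations are purely algebraic.
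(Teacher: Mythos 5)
Your proposal is correct in all three parts. Note, however, that the paper does not actually prove this lemma: it simply defers to the literature on matrix calculus (citing Lax), so there is no argument in the paper to compare against step by step. What you supply is the standard self-contained derivation: \eqref{eq:matCalc3} from linearity of the trace, \eqref{eq:matCalc1} (Jacobi's formula) via the cofactor expansion, the chain rule, and the adjugate identity $M^{-1}=\det(M)^{-1}\operatorname{adj}(M)$, and \eqref{eq:matCalc2} by differentiating $M(x)M(x)^{-1}=I$. You also correctly identify and close the one genuine gap in the naive version of the argument for \eqref{eq:matCalc2}, namely that one must first know $M^{-1}$ is differentiable before applying the product rule to $MM^{-1}=I$; your appeal to Cramer's rule together with the nonvanishing of $\det(M)$ settles this. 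The only mild redundancy is that once you have established \eqref{eq:matCalc1} via cofactors, the $C^1$ regularity of $M^{-1}$ is already implicit in the same computation, but this does not affect correctness. In short: where the paper buys brevity by citation, your write-up buys self-containment at the cost of a few lines, and both are legitimate.
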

    
    \begin{proof}
      See literature on matrix calculus, \eg \cite[Chapter 9]{Lax97}.
    \end{proof}

    \begin{lemma}
      Let $V \colonequals \partial_{\e} \cX(0)$, $u \colonequals \argmin_{v \in U(\p)} J(\p,u)$,
      and
      \begin{align*}
        \cA'(0) \colonequals \div(V)I - DV - DV^T
        \text{.}
      \end{align*}
      Then
      \begin{align}
        \label{eq:volumeDerivative}
        \begin{aligned}
          \partial_{\e} \cJ(\p)
          & = \int_{\Omega(\p)} \kappa \Delta u \left( \cA'(0) : D^2u - \Delta V \cdot \nabla u - \frac{1}{2} \div(V)\Delta u \right) \d{x}
          \\ & \qquad + \int_{\Omega(\p)} \frac{\sigma}{2} \norm{\nabla u}_{\cA'(0)}^2 \d{x}
          \text{.}
        \end{aligned}
      \end{align}
    \end{lemma}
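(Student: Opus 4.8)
The plan is to start from \eqref{eq:firstDerivative}, which gives $\partial_{\e}\cJ(\p) = \partial_{\e}\hat{J}(0,u)$ and thereby reduces everything to differentiating the transformed energy with respect to $\q$ at $\q=0$ with the minimizer $u$ held fixed. Indeed, in the chain rule for $\partial_{\e}\hat{\cJ}$ the contribution of $\partial_{\e}\hat{\u}(0)$ drops out precisely because $\hat{J}_u(0,u)=0$, exactly as in the proof of the preceding lemma. I would therefore work directly with the explicit integrand \eqref{eq:transformedIntegrand}, written as $\tfrac{\kappa}{2}(\div(\cA(\q)\nabla u))^2/\det D\cX(\q) + \tfrac{\sigma}{2}\norm{\nabla u}_{\cA(\q)}^2$, where near $\q=0$ the absolute value may be dropped since $\det D\cX(0)=1>0$ and $\cX$ is continuous. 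Differentiation under the integral sign is licensed by the $C^{m-2}$ regularity and the dominated-convergence argument already established for $\hat{J}_u$.

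First I would compute the three elementary $\q$-derivatives at $\q=0$. From $\cX(0)=\operatorname{id}$ we get $D\cX(0)=I$ and $\det D\cX(0)=1$, and from the established identity $\partial_{\e}\cX(0)=V$ we get $\partial_{\e}D\cX(0)=DV$. Formula \eqref{eq:matCalc1} then yields $\partial_{\e}\det D\cX(0)=\Tr(DV)=\div V$. Writing $\cA=\det(D\cX)\,(D\cX)^{-1}(D\cX)^{-T}$ and combining the product rule with \eqref{eq:matCalc2}, evaluation at $\q=0$ gives $\partial_{\e}\cA(0)=(\div V)I - DV - DV^T = \cA'(0)$, which is exactly the matrix in the statement. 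The surface-tension contribution is then immediate, $\partial_{\e}[\tfrac{\sigma}{2}\nabla u\cdot\cA(\q)\nabla u]|_{\q=0}=\tfrac{\sigma}{2}\norm{\nabla u}_{\cA'(0)}^2$, reproducing the second integral.

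For the bending term I would apply the quotient rule to $\tfrac{\kappa}{2}w(\q)^2/d(\q)$ with $w(\q)=\div(\cA(\q)\nabla u)$ and $d(\q)=\det D\cX(\q)$. Using $w(0)=\Delta u$, $d(0)=1$, $\partial_{\e}d(0)=\div V$, and $\partial_{\e}w(0)=\div(\cA'(0)\nabla u)$ (the $\q$-derivative commutes with the spatial divergence because $u$ is fixed), this gives $\kappa\Delta u\,\div(\cA'(0)\nabla u) - \tfrac{\kappa}{2}(\Delta u)^2\div V$. It then remains to rewrite $\div(\cA'(0)\nabla u)$: expanding the divergence of a matrix-vector product gives $\div(\cA'(0)\nabla u)=\cA'(0):D^2u + (\div\cA'(0))\cdot\nabla u$, and a short index computation shows $\div\cA'(0)=-\Delta V$, since the terms $\partial_j(\div V)$ and $-\sum_i\partial_i\partial_j V_i$ cancel and leave $-\sum_i\partial_i\partial_i V_j=-(\Delta V)_j$. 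Substituting and collecting the three bending contributions produces exactly $\kappa\Delta u(\cA'(0):D^2u - \Delta V\cdot\nabla u - \tfrac{1}{2}\div(V)\Delta u)$, and together with the surface-tension term this is \eqref{eq:volumeDerivative}.

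The hard part will be twofold. First, the bookkeeping in $\partial_{\e}\cA(0)$ and especially the cancellation identity $\div\cA'(0)=-\Delta V$ must be carried out carefully at the index level, since this is where a stray sign or transpose would corrupt the final formula. Second, one must check that every term is genuinely integrable: as $u\in H^2(\Omega(\p))$ only, the products $\Delta u\,(\cA'(0):D^2u)$ and $\norm{\nabla u}_{\cA'(0)}^2$ pair $L^2$ functions against continuous coefficients (recall $V$ inherits the $C^{m-1}$ spatial regularity of $\cX$, so for $m\geq 3$ even $\Delta V$ is continuous), whence each summand lies in $L^1(\Omega(\p))$ and the interchange of differentiation and integration is justified. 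Everything else is a direct, if slightly tedious, application of the product and quotient rules.
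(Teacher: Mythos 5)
Your proposal is correct and follows essentially the same route as the paper: reduce to $\partial_{\e}\hat{J}(0,u)$ via \eqref{eq:firstDerivative}, compute $\partial_{\e}\det D\cX(0)=\div V$ and $\cA'(0)=\div(V)I-DV-DV^T$ from \eqref{eq:matCalc1}--\eqref{eq:matCalc2}, establish the index identity $\sum_i\partial_i\cA'_{ij}(0)=-\Delta V_j$, and assemble the bending and surface-tension terms by the product/quotient rule. The only difference is cosmetic: you additionally spell out the integrability of each term, which the paper leaves implicit.
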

    
    \begin{proof}
      From \eqref{eq:firstDerivative} we know that
      $\partial_{\e} \cJ(\p) = \partial_\e \hat{J}(0,u)$ and hence it
      suffices to compute the latter.
      In the following we use without further emphasis the identities
      $D\cX(0,\cdot) \equiv \operatorname{id}_{\R^2}$ and $\det D\cX(0) \equiv 1$.
      Based on this and on the identities \eqref{eq:matCalc1} and \eqref{eq:matCalc2}
      we have
      \begin{align}
        \label{eq:helper1707071339}
        \begin{split}
          \left. \dd{\e} \det(D\cX(\q)) \right|_{\q = 0}
          & = \left. \det(D\cX(\q)) \Tr\left( D\cX(\q)^{-1} \dd{\e} D\cX(\q) \right) \right|_{\q = 0}
          \\ & = \Tr\left(\dd{\e} D\cX(\q) \right) = \div(V)
        \end{split}
        \\
        \label{eq:helper1707071345}
        \begin{split}
          \left. \dd{\e} D\cX^{-1}(\q) \right|_{\q = 0}
          & = \left. -D\cX(\q)^{-1} \frac{\partial D\cX(\q)}{\partial \e} D\cX(\q)^{-1}\right|_{\q=0}
          \\ & = - \frac{\partial D\cX(0)}{\partial \e} = -DV
        \text{.}
        \end{split}
      \end{align}
    
      By definition of $\cA$ we have
      \begin{align*}
        \cA(\q) = \det(D\cX(\q)) \, D\cX(\q)^{-1} D\cX(\q)^{-T}\text{,}
      \end{align*}
      where we again used $\det(\cX(\q)) > 0$.
      The product rule together with the identities \eqref{eq:helper1707071339}
      and \eqref{eq:helper1707071345} then leads us to
      \begin{align}
        \label{eq:helper1707071323}
        \begin{aligned}
          \left.\dd{\e} \cA(\p) \right|_{\q=0}
          & = \left.\frac{\partial \det(D\cX(\q))}{\partial \e}\right|_{\q=0} D\cX(\q)^{-1} D\cX(\q)^{-T}
          \\ & \qquad + \det(D\cX(\q)) \left.\frac{\partial D\cX(\q)^{-1}}{\partial \e}\right|_{\q=0} D\cX(\q)^{-T}
          \\ & \qquad + \det(D\cX(\q)) D\cX(\q)^{-1} \left.\frac{\partial D\cX(\q)^{-T}}{\partial \e}\right|_{\q=0}
          \\ & = \div(V) I - DV - DV^T
          \\ & = \cA'(0)
          \text{.}
        \end{aligned}
      \end{align}
      Equation \eqref{eq:helper1707071323} gives us, using $\partial_i \colonequals \dd{x_i}$,
      \begin{align*}
        \partial_i \cA'(0)
        & = \left(\sum_{k=1}^{2} \partial_{ik} V_k \right) I - \partial_i DV - \partial_i DV^T
      \end{align*}
      and so
      \begin{align}
        \label{eq:helper1707071431}
        \begin{aligned}
          \sum_{i=1}^{2} \partial_i \cA'_{ij}(0)
          & = \sum_{i,k=1}^{2} \partial_{ik} V_k \delta_{ij} - \sum_{i = 1}^{2} \left( \partial_{ij} V_i + \partial_{ii} V_j \right)
          \\ & = \sum_{k=1}^{2} \partial_{jk} V_k - \sum_{i = 1}^{2} \left( \partial_{ij} V_i + \partial_{ii} V_j \right)
          \\ & = \sum_{i=1}^{2} \left( \partial_{ji} V_i - \partial_{ij} V_i - \sum_{i=1}^{2} \partial_{ii} V_j \right)
          \\ & = -\Delta V_j
        \end{aligned}
      \end{align}
      for $j\in\{1,2\}$.
      Noting
      \begin{align*}
        \div(\cA(\q) \nabla u)
        & = \sum_{i=1}^{2} \partial_i \left(\sum_{j=1}^{2} \cA(\q)_{ij} \partial_j u \right)
        \\ & = \sum_{i,j=1}^{2} \left( \cA(\q)_{ij} \partial_{ij} u + \partial_i\cA(\q)_{ij} \partial_j u \right)
        \\ & = \cA(\q) : D^2u + \sum_{j=1}^{2} \left(\sum_{i=1}^{2} \partial_i \cA(\q)_{ij} \right) \partial_j u
      \end{align*}
      we therefore conclude
      \begin{align}
        \label{eq:helper1707071438}
        \begin{aligned}
          \left.\dd{\e} \div(\cA(\q)\nabla u)\right|_{\q=0}
          & = \cA'(0) : D^2 u - \Delta V \cdot \nabla u
        \end{aligned}
      \end{align}
      where $\Delta V \colonequals (\Delta V_i)_{i=1,\dots,n}\in \R^{2}$.
      
      We recall
      \begin{align*}
        \hat{J}(\q,u)
        & = \frac{1}{2}\int_{\Omega(\p)} \kappa \frac{\div(\cA(\q)\nabla u)^2}{\det D\cX(\q)} + \sigma \cA(\q)\nabla u \cdot \nabla u \d{x}
        \text{.}
      \end{align*}
      Combining \eqref{eq:helper1707071339}, \eqref{eq:helper1707071323}
      and \eqref{eq:helper1707071438} yields
      \begin{align*}
        \left.\dd{\e} \hat{J}(\q,u)\right|_{\q=0}
        & = \frac{1}{2}\int_{\Omega(\p)} 2\kappa \div(\cA(\q)\nabla u) \left.\dd{\e} \div(\cA(\q) \nabla u)\right|_{\q=0} \d{x}
        \\ & \quad - \frac{1}{2} \int_{\Omega(\p)} \kappa \frac{\div(\cA(\q)\nabla u)^2}{\left(\det D\cX(\q)\right)^2} \left.\dd{\e} \det( D\cX(\q) )\right|_{\q=0} \d{x}
        \\ & \qquad + \frac{1}{2} \int_{\Omega(\p)} \sigma \left.\dd{\e} \cA(\q)\right|_{\q = 0} \nabla u \cdot \nabla u \d{x}
        \\ & = \int_{\Omega(\p)} \kappa \Delta u \left( \cA'(0) : D^2u - \Delta V \cdot \nabla u - \frac{1}{2} \div(V)\Delta u \right)
        \\ & \qquad + \frac{\sigma}{2} \norm{\nabla u}_{\cA'(0)}^2 \d{x}
        \text{,}
      \end{align*}
      which proves the claim as stated.
    \end{proof}

    In general, the exact minimizer for $\cJ(\p)$ is unknown and can only be approximated.
    The following result gives an upper bound on the approximation error.
    \begin{lemma}
      Let $u = \argmin_{v \in U(\p)} J(\p,v)$ and $\wt{u} \in H^2(\Omega(\p))$.
      Then there exists a constant $C>0$ such that
      \begin{align*}
        \abs{ \partial_\e \hat{J}(0,u) - \partial_\e \hat{J}(0,\wt{u})} \leq C \norm{V}_{C^2(\Omega(\p))} \, \norm{u + \wt{u}}_{H^2(\Omega(\p))} \, \norm{u - \wt{u}}_{H^2(\Omega(\p))}
      \end{align*}
    \end{lemma}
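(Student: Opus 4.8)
The plan is to exploit the fact that the right-hand side of \eqref{eq:volumeDerivative} is a \emph{quadratic} functional of its second argument, so that the difference to be estimated can be linearised by a polarisation identity. I would first note that the computation carried out in the proof of the previous lemma differentiates $\hat{J}(\q,u)$ with respect to $\q$ at $\q=0$ with $u$ held fixed, and never uses that $u$ is the minimiser; hence the formula \eqref{eq:volumeDerivative} in fact represents $\partial_\e \hat{J}(0,u)$ for \emph{every} $u \in H^2(\Omega(\p))$. Writing $Q(u) := \partial_\e \hat{J}(0,u)$, each of the four summands in \eqref{eq:volumeDerivative} contains exactly two factors drawn from $\{\nabla u,\ \Delta u,\ D^2 u\}$, so $Q$ is homogeneous of degree two. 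Let $B$ be the associated symmetric bilinear form with $B(u,u)=Q(u)$, obtained by the usual symmetrisation of each term, \eg replacing $\Delta u\,(\cA'(0):D^2 u)$ by $\tfrac12\big(\Delta v\,(\cA'(0):D^2 w)+\Delta w\,(\cA'(0):D^2 v)\big)$, and likewise for the remaining bending terms; the surface-tension term $\tfrac{\sigma}{2}\norm{\nabla u}_{\cA'(0)}^2$ is already symmetric since $\cA'(0)$ is.

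Next I would invoke the elementary identity $Q(u)-Q(\wt{u})=B(u,u)-B(\wt{u},\wt{u})=B(u+\wt{u},\,u-\wt{u})$, valid for any symmetric bilinear form. This reduces the claim to the continuity estimate
\[
  \abs{B(v,w)} \le C\,\norm{V}_{C^2(\Omega(\p))}\,\norm{v}_{H^2(\Omega(\p))}\,\norm{w}_{H^2(\Omega(\p))},
\]
applied with $v=u+\wt{u}$ and $w=u-\wt{u}$.

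For the continuity estimate I would bound the coefficients of $B$ in $L^\infty(\Omega(\p))$ and then apply Cauchy--Schwarz term by term. Since $\cA'(0)=\div(V)I-DV-DV^T$ and $\div V$ involve only first derivatives of $V$, one has $\norm{\cA'(0)}_{L^\infty}+\norm{\div V}_{L^\infty}\le C\norm{V}_{C^1}$, while $\Delta V$ involves second derivatives, giving $\norm{\Delta V}_{L^\infty}\le C\norm{V}_{C^2}$; in particular every coefficient is bounded by $C\norm{V}_{C^2(\Omega(\p))}$. Each summand of $B(v,w)$ is the integral over the bounded domain $\Omega(\p)$ of one such coefficient times the product of a derivative of $v$ of order $\le 2$ with a derivative of $w$ of order $\le 2$, so Cauchy--Schwarz in $L^2(\Omega(\p))$ bounds it by $C\norm{V}_{C^2}\norm{v}_{H^2}\norm{w}_{H^2}$, the $H^2$-norms absorbing the $L^2$-norms of $\nabla$, $\Delta$ and $D^2$.

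I do not expect a genuine obstacle here; the argument is essentially bookkeeping once the quadratic structure is recognised. The two points worth a word of care are the observation that \eqref{eq:volumeDerivative} holds for arbitrary $u$ rather than only the minimiser, and the fact that each summand carries exactly one coefficient factor, which is precisely why $\norm{V}_{C^2}$ enters to the first power, as required by the statement.
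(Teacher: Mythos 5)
Your proposal is correct and follows essentially the same route as the paper: both recognize that $\partial_\e \hat{J}(0,\cdot)$ is a quadratic form induced by a bilinear form $a$ with coefficients bounded by $\norm{V}_{C^2(\Omega(\p))}$, polarize the difference $a(u,u)-a(\wt{u},\wt{u})$ into terms of the form $a(u\pm\wt{u},u\mp\wt{u})$, and conclude by the continuity estimate via Cauchy--Schwarz. The only cosmetic difference is that you symmetrize the bilinear form first, whereas the paper keeps it non-symmetric and uses the identity $a(u,u)-a(\wt{u},\wt{u})=\tfrac12 a(u+\wt{u},u-\wt{u})+\tfrac12 a(u-\wt{u},u+\wt{u})$; your explicit remark that \eqref{eq:volumeDerivative} holds for arbitrary $u\in H^2(\Omega(\p))$ is a point the paper leaves implicit.
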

    
    \begin{proof}
      Note from \eqref{eq:volumeDerivative} that $\hat{J}_u$ is induced by
      a non-symmetric bilinear form, \ie there exists a bilinear form
      $a\colon H^2(\Omega(\p)) \times H^2(\Omega(\p)) \rightarrow \R$
      such that $\hat{J}_u(v) = a(v,v)$.
      Upon investigation of the coefficients of $a$ it is readily seen
      that there exists a $C \in \R_{>0}$ such that for all $v,w \in H^2(\Omega(\p))$
      \begin{align*}
        \abs{a(v,w)} \leq C \norm{V}_{C^2(\Omega(\p))} \, \norm{v}_{H^2(\Omega(\p))} \norm{w}_{H^2(\Omega(\p))}\text{.}
      \end{align*}
      Now, consider
      \begin{align*}
        \abs{ \partial_{\e}\hat{J}(0,u) - \partial_{\e}\hat{J}(0,\wt{u})}
        & = \abs{ a(u,u) - a(\wt{u},\wt{u}) }
        \\ & = \abs{ \frac{1}{2} a(u+\wt{u},u-\wt{u}) + \frac{1}{2} a(u-\wt{u},u+\wt{u}) }
        \\ & \leq C \norm{V}_{C^2(\Omega(\p))} \, \norm{u + \wt{u}}_{H^2(\Omega(\p))} \, \norm{u - \wt{u}}_{H^2(\Omega(\p))}
        \text{.}
      \end{align*}
    \end{proof}
    It is important to note that while the derivative $\dd{\e} \cJ(\p)$
    itself is independent of $V$, the actual choice of $V$ very well
    enters the approximation error.
    Therefore it is desirable to construct a $V$ with a bounded $C^2$-norm.
  
\section{Numerical Examples}\label{sec:examples}

  In this section we illustrate our formula for the derivative by numerical computations for various particle configurations.
  Here we always defined the bending rigidity $\kappa = 1$ and specified no surface tension $\sigma = 0$.
  The optimal membrane shapes $u(\p)$ for fixed particle configurations $\p$ were approximated by finite element discretizations $u_h(\p) \approx u(\p)$.
  A possible discretization using a penalty approach is discussed in \cite{GrKi17} where we also gave a proof of convergence.
  The vector fields $V$ that occur in the derivative were explicitly constructed in such a way that they both fulfill \eqref{eq:V0conditions} and can be represented as finite element functions within the used discretization.
  The expressions for the discretized derivatives were evaluated exactly by using standard quadrature methods.

  \subsection{Two circular particles}
  Let $\Omega = [-10,10]^2$ and consider two circular particles of radius one $B_1$, $B_2$ that each induce on $\Gamma_i \colonequals \partial B_i$ the boundary conditions
  \begin{align*}
    u|_{\Gamma_i}(y) & = 0 + \gamma_{i1} y_1 + \gamma_{i2} y_2 + \gamma_{i3}
    , \qquad \partial_\nu u|_{\Gamma_i}(y) = 1 + \gamma_{i1}\nu_1(y) + \gamma_{i2}\nu_2(y)
    \text{.}
  \end{align*}
  We are interested in the interaction energy as a function of the distance of these two particles and therefore we define
  \begin{align*}
    \hat{\q} & \colonequals \mat{ -1 & 0 & 0 \\ 1 & 0 & 0 }
    ,\qquad
    \q \colonequals \frac{\hat{\q}}{\norm{\hat{\q}}}
    ,\qquad
    f(r)  \colonequals \cJ(r\q)
    \text{.}
  \end{align*}
  In this formulation $r$ is the distance between the particle centers and the particles touch for $r=2$.
  
  \begin{figure}
    \begin{center}
      \includegraphics[width=0.45\textwidth]{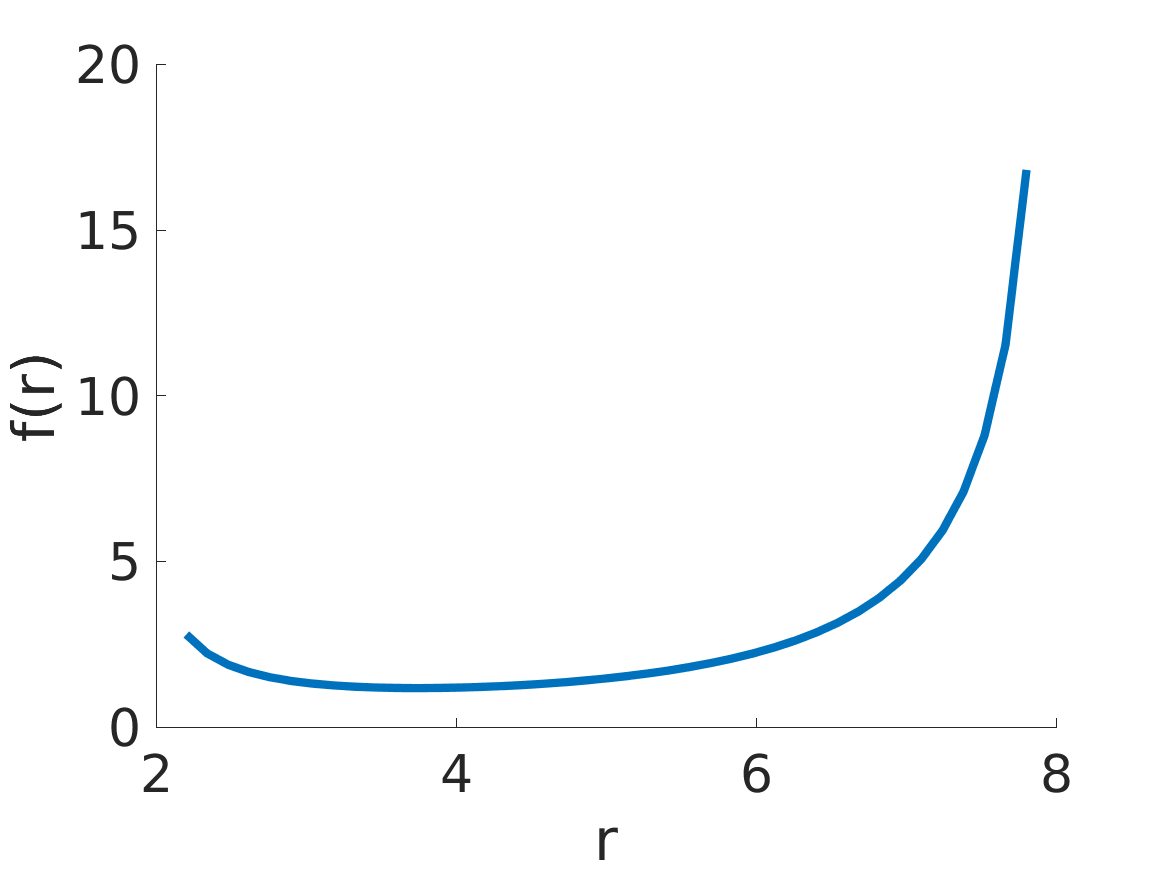}
      \ 
      \includegraphics[width=0.45\textwidth]{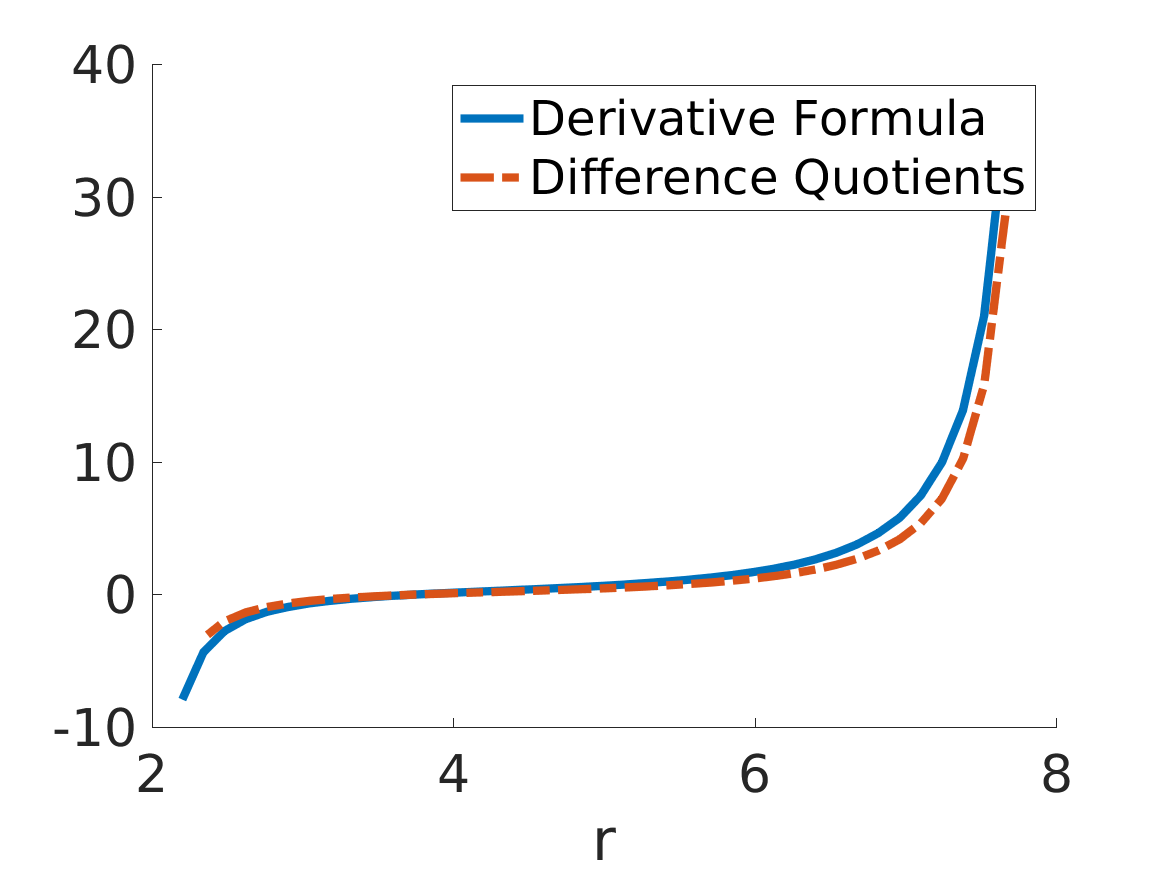}
    \end{center}
    \caption{Left: Elastic energy for two circular inclusions as a function of their distance. Right: Comparison of difference quotients with the derivative formula \eqref{eq:volumeDerivative} for this function.}
    \label{fig:twoCircularParticles}
  \end{figure}
  
  On the left picture of \cref{fig:twoCircularParticles} we depict the approximate values of $f(r)$ for $2.06 \leq r \leq 7.94$ that we obtained from our discretization, and
  on right we show two approximations of $f'(r)$.
  One approximation was obtained by computing the difference quotients from the function values and the other one was obtained from the derivative formula \eqref{eq:volumeDerivative}.

  \subsection{Two peanut shaped particles}
  Let $\Omega = [-5,5]^2$ and consider particles whose shape is defined by the zero level set of
  \begin{align*}
    \frac{1}{20} - x^4 + \frac{19}{20} x^2 - 2x^2 y^2- \frac{19}{20} y^2 - y^4\text{.}
  \end{align*}
  We assume that each particle induces the following boundary conditions:
  \begin{align*}
    u|_{\Gamma_i}(y) & = 0 + \gamma_{i1} y_1 + \gamma_{i2} y_2 + \gamma_{i3}
    , \qquad \partial_\nu u|_{\Gamma_i}(y) = \partial_\nu g(y) + \gamma_{i1}\nu_1(y) + \gamma_{i2}\nu_2(y)
  \end{align*}
  where $g(y) \colonequals \frac{1}{2} (y_1^2 + y_2^2)$.
  Since the particles are not rotationally
  symmetric, the interaction energy might not just
  depend on the particle distance,
  but on the distances in $x$- and $y$-direction
  and the relative rotation angle.
  To investigate all three directions
  we define
  \begin{align*}
    \p      & = \mat{-2.5 & 0 & 0 \\ 2.5 & 0 & 0}
    ,\quad\q^1 = \mat{1 & 0 & 0 \\ 0 & 0 & 0}
    ,\quad\q^2 = \mat{0 & 1 & 0 \\ 0 & 0 & 0}
    ,\quad\q^3 = \mat{0 & 0 & 1 \\ 0 & 0 & 0}
  \end{align*}
  and $f_i(t) \colonequals \cJ(\p+t\q^i)$.
  Then $f_1,f_2$, and $f_3$ represent the interaction energy
  along changes of the relative distance in $x$- and $y$-direction
  and of the relative rotation, respectively.
  
  \begin{figure}
    \begin{center}
      \includegraphics[width=0.45\textwidth]{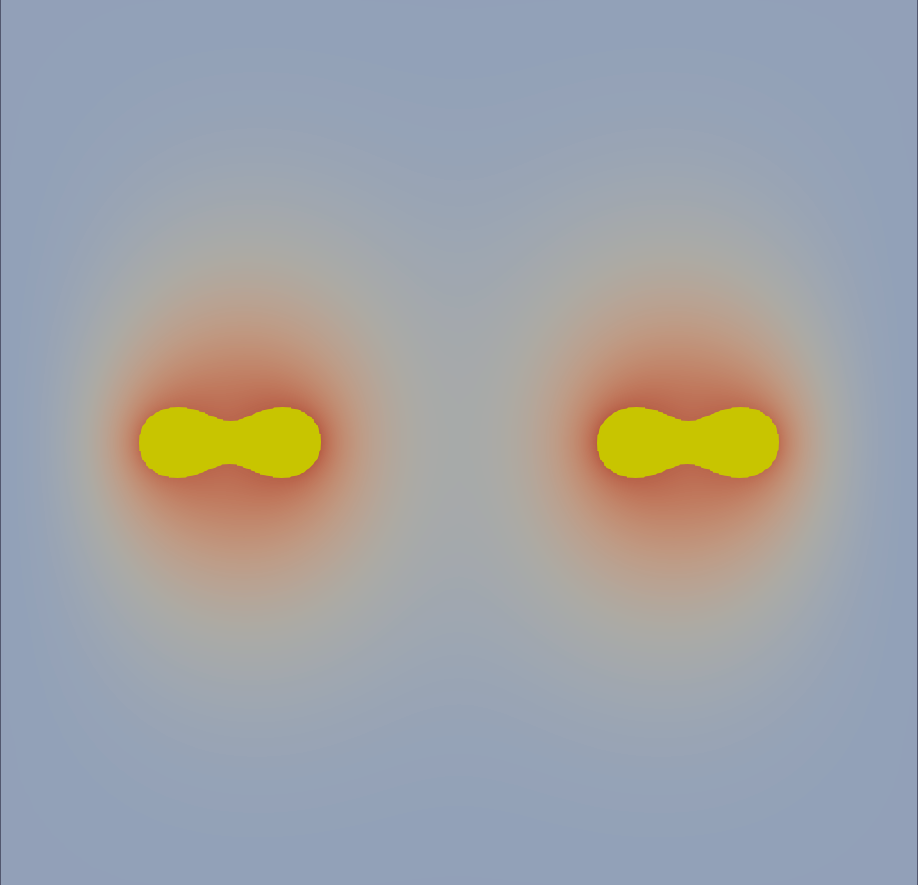}
      \hspace{2em}
      \includegraphics[width=0.45\textwidth]{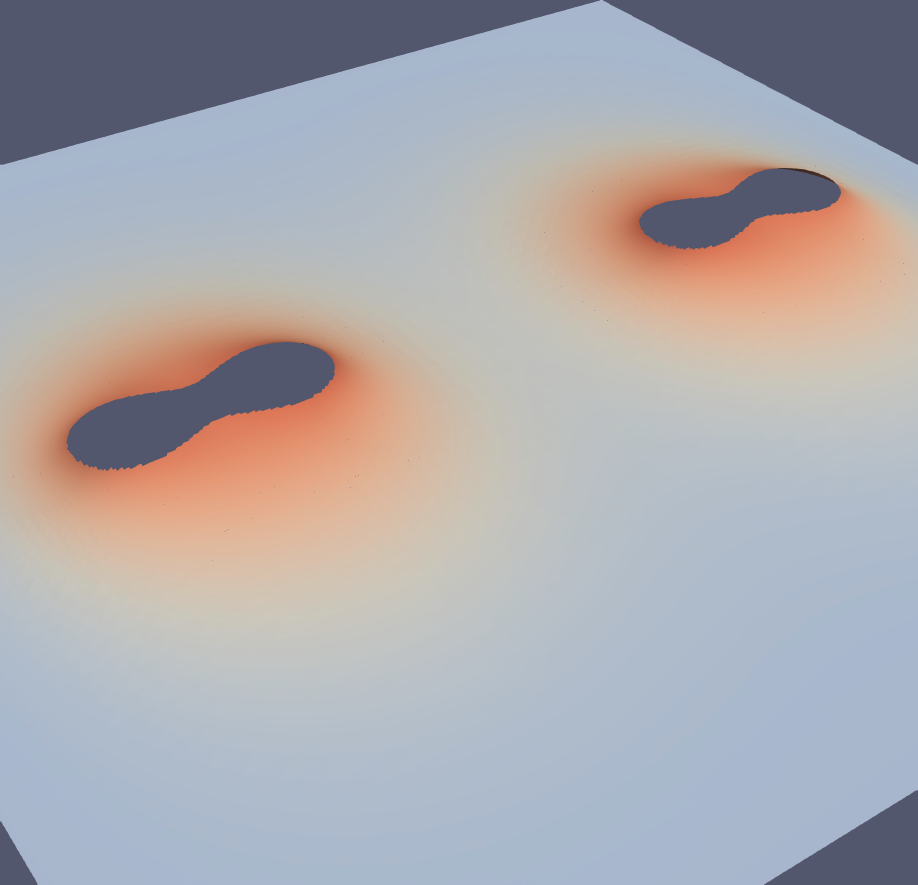}
    \end{center}
    \caption{Left: Level set view of the optimal membrane shape for the particle configuration $\p$. Right: Rendered 3D view.}
    \label{fig:peanuts0}
  \end{figure}
  
  \begin{figure}
    \begin{center}
      \includegraphics[width=0.45\textwidth]{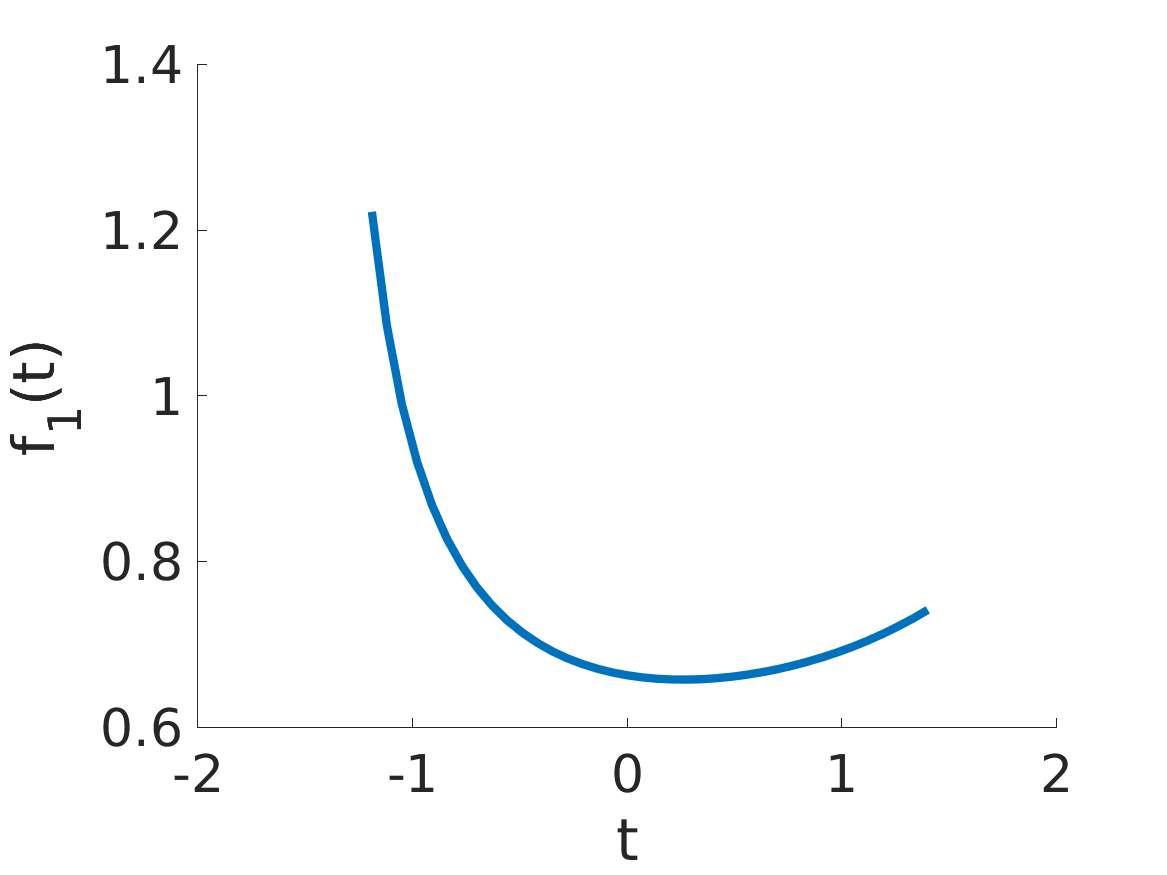}
      \ 
      \includegraphics[width=0.45\textwidth]{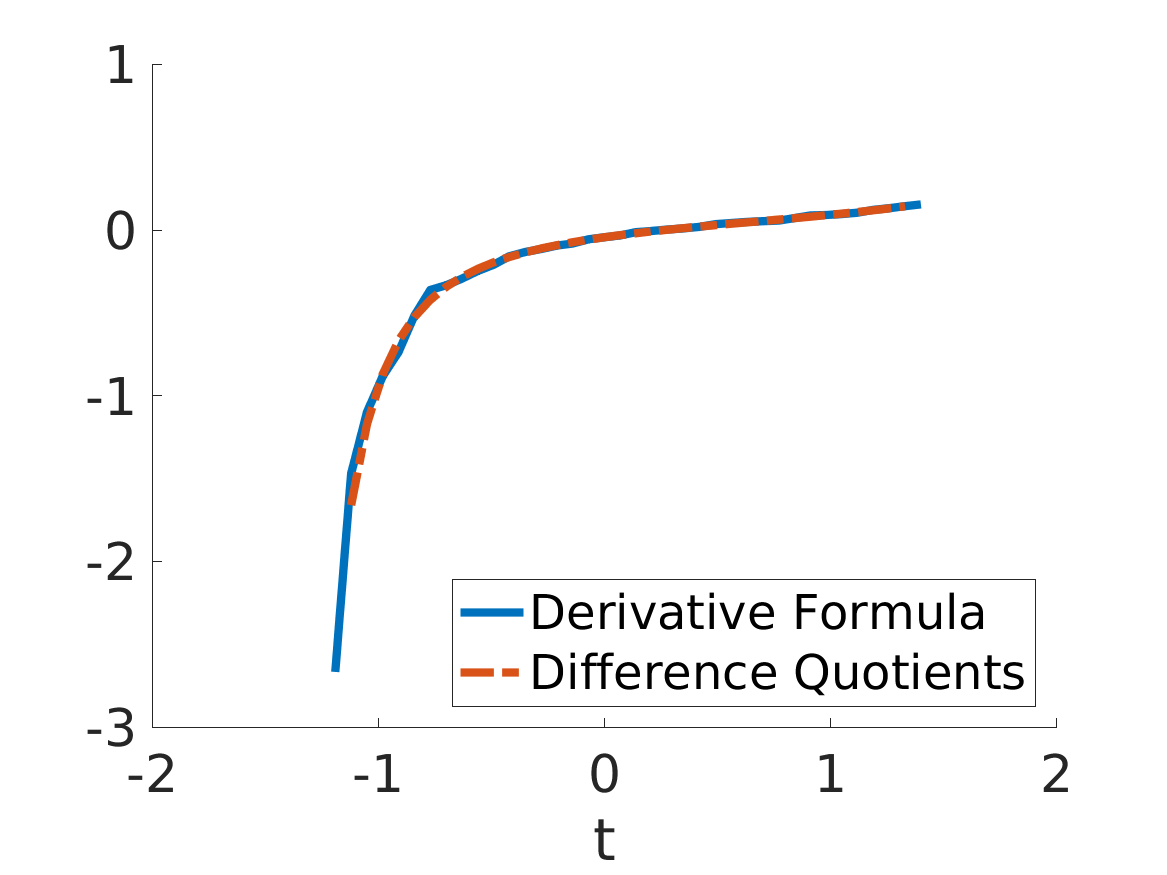}
    \end{center}
    \caption{Left: Plot of $f_1(t)$. Right: Comparison of difference quotients with the derivative formula \eqref{eq:volumeDerivative}.}
    \label{fig:peanuts1}
  \end{figure}
  
  \begin{figure}
    \begin{center}
      \includegraphics[width=0.45\textwidth]{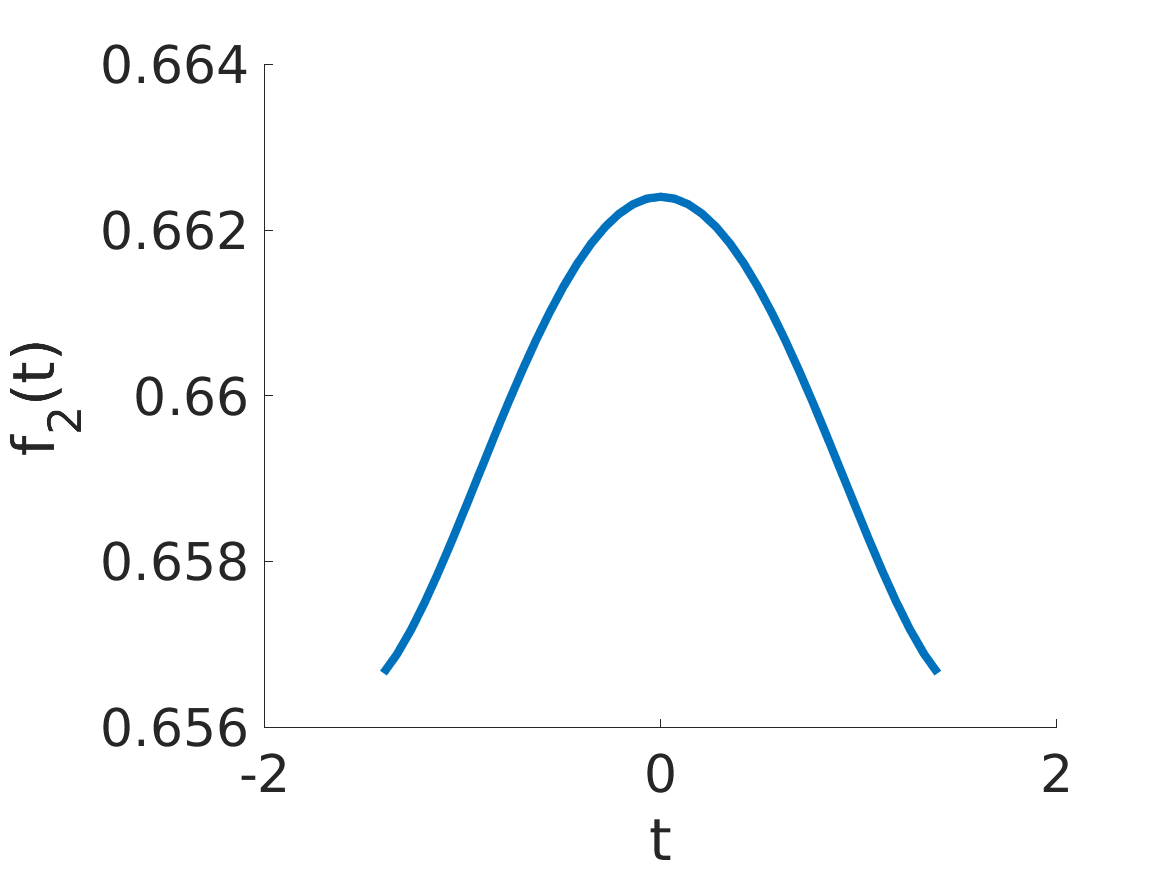}
      \ 
      \includegraphics[width=0.45\textwidth]{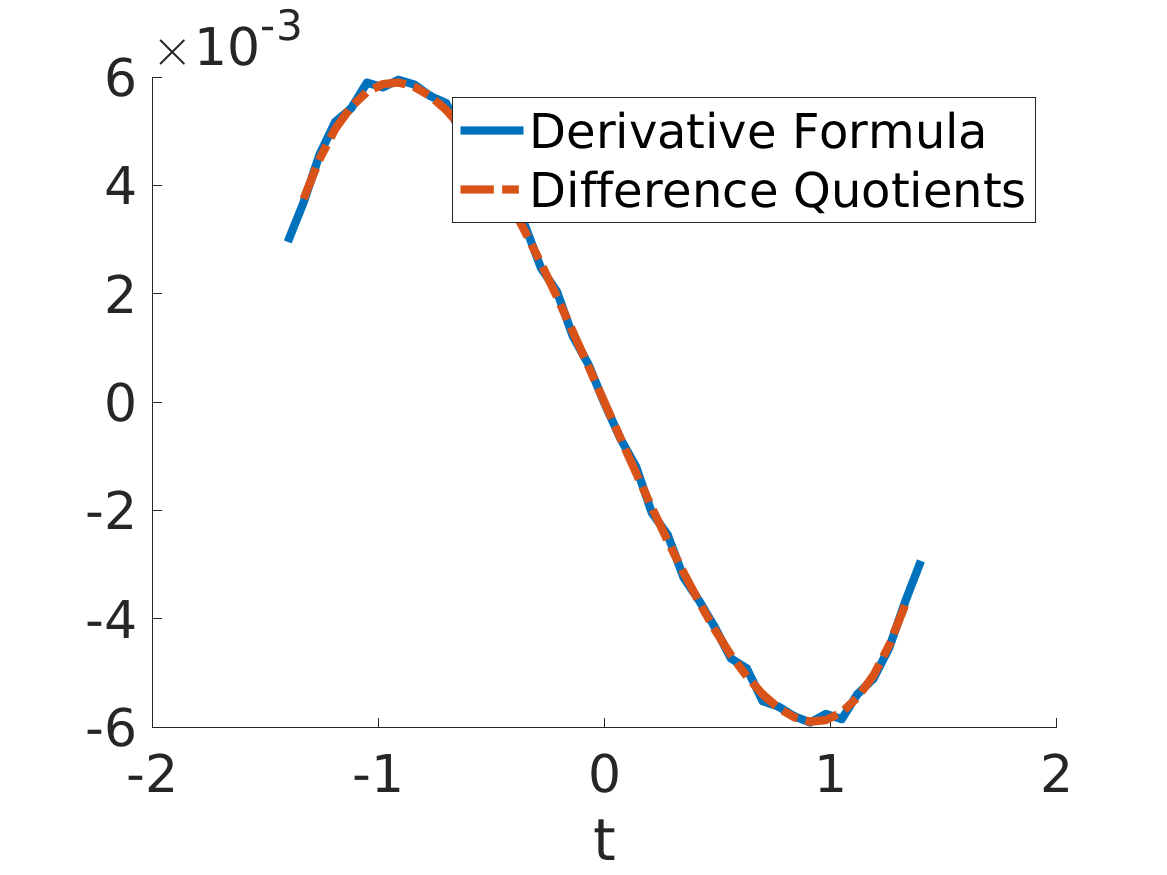}
    \end{center}
    \caption{Left: Plot of $f_2(t)$. Right: Comparison of difference quotients with the derivative formula \eqref{eq:volumeDerivative}.}
    \label{fig:peanuts2}
  \end{figure}
  
  \begin{figure}
    \begin{center}
      \includegraphics[width=0.45\textwidth]{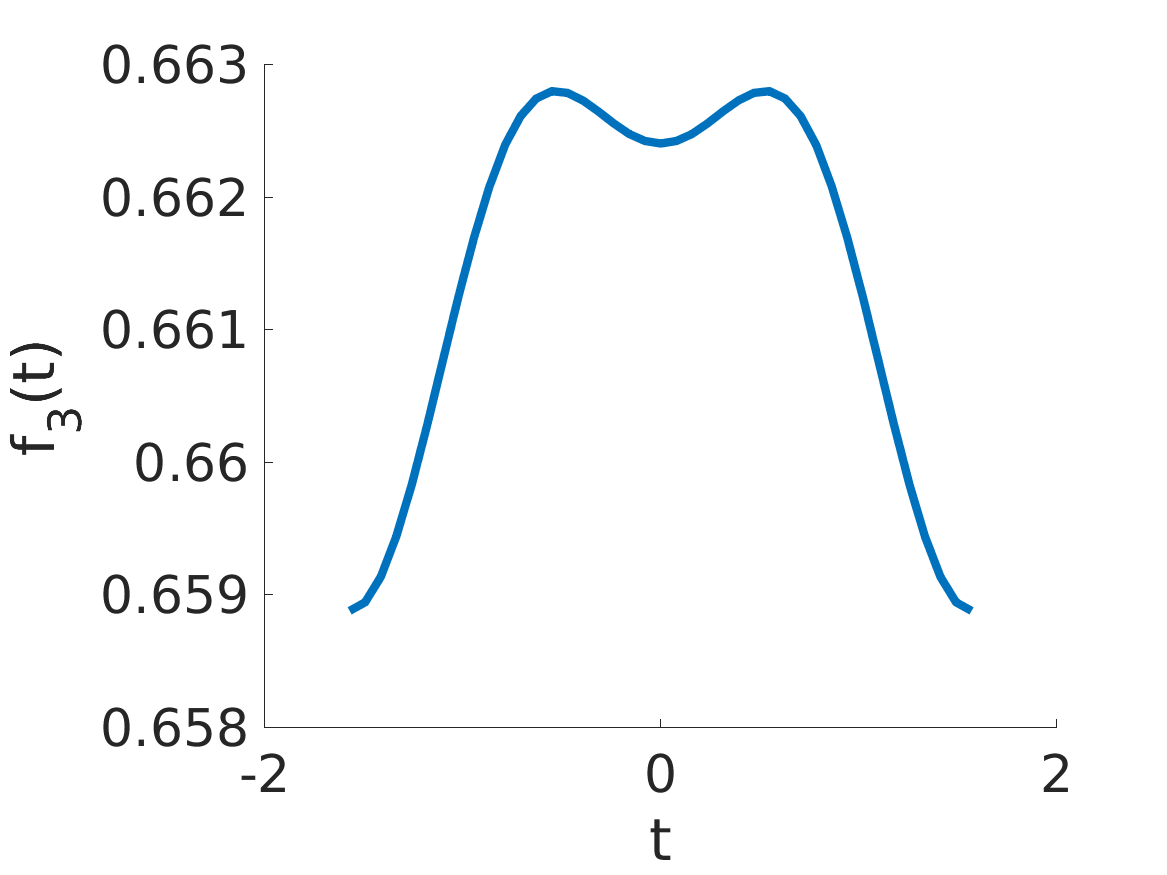}
      \ 
      \includegraphics[width=0.45\textwidth]{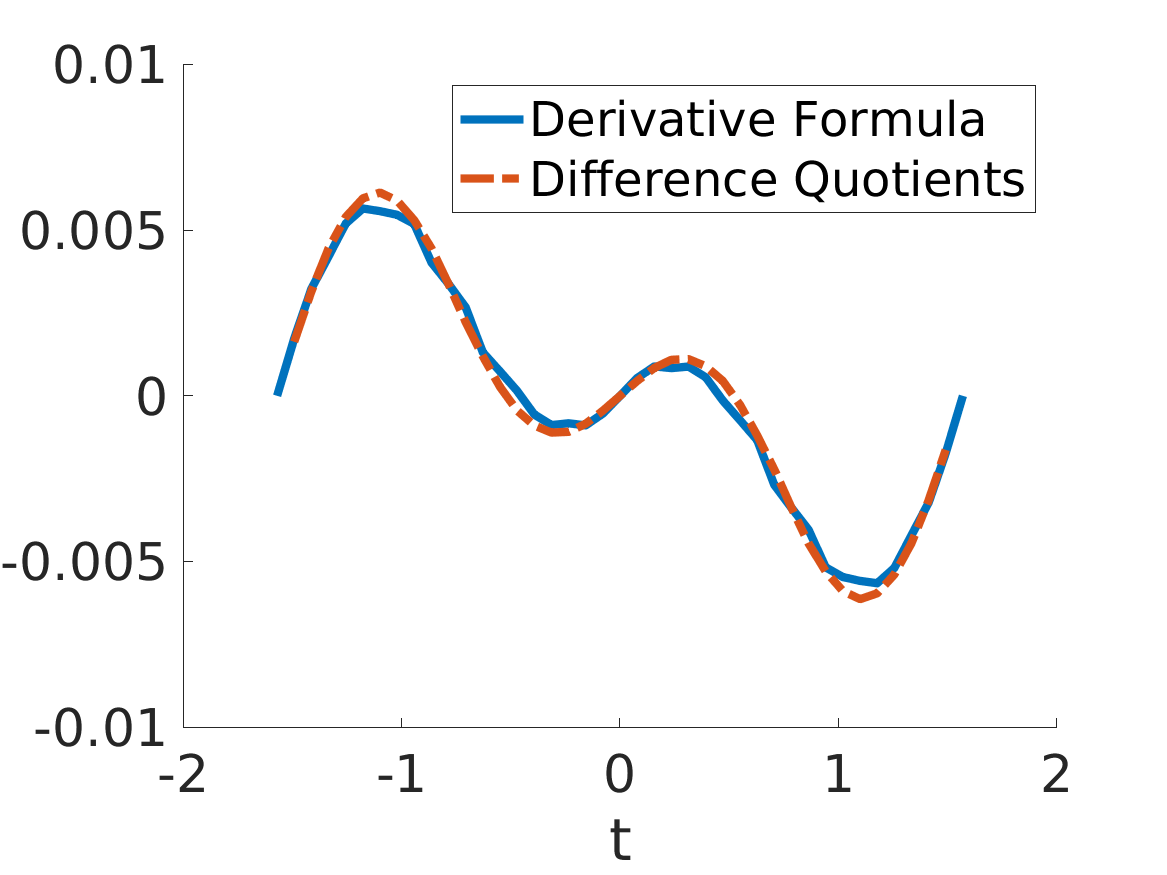}
    \end{center}
    \caption{Left: Plot of $f_3(t)$. Right: Comparison of difference quotients with the derivative formula \eqref{eq:volumeDerivative}.}
    \label{fig:peanuts3}
  \end{figure}
  
  In \cref{fig:peanuts0} we show an example of an optimal membrane shape given the particle configuration $\p$ as obtained from our discretization.
  In \cref{fig:peanuts1}, \cref{fig:peanuts2} and \cref{fig:peanuts3} we evaluate the functions $f_i(t)$ for $-1.4 \leq t \leq 1.4$ and compare the approximation of $f_i'(t)$ by difference quotients with the approximation obtained from evaluating the derivative formula \eqref{eq:volumeDerivative}.
  
  Also in this setting we observe that our formula is generally in good agreement with the approximation by difference quotients.

  \subsection{Gradient flow}
  
  An immediate application of our findings is to employ a gradient flow
  \begin{align*}
    \p'(t) = -\nabla\cJ(\p(t)), \qquad \p(0) = \p_0
  \end{align*}
  in order to investigate stable particle configurations.
  
  In \cref{fig:gradientFlow} we illustrate some time steps
  of the flow for two elliptic particles of different size
  on a square domain $\Omega$.
  Here we assume the boundary conditions
  \begin{align*}
    u|_{\Gamma_i}(y) & = \gamma_{i}
    , \qquad \partial_\nu u|_{\Gamma_i}(y) = 1
  \end{align*}
  for each particle.
  The computations use a discretization of the gradient flow by an explicit Euler scheme
  \begin{align*}
    \p_{k+1}  \colonequals \p_k - \tau \nabla \cJ(\p_k)
  \end{align*}
  with a fixed time step size $\tau > 0$.
  The gradient $\nabla \cJ(\p_k)$ is approximated using the derivative
  formula \eqref{eq:volumeDerivative} for a finite element approximation
  of $u$.
  In fact, the time discrete gradient flow can be viewed as
  gradient descent method with fixed step size for the
  computation of minimizers of $\cJ$.
  Notice, that the simple gradient flow approach was used to simplify
  the presentation and that more sophisticated iterative
  methods based on first order derivatives could be used.
  
  For the given setting with initially unaligned
  particles, the gradient flow leads to a configuration
  where the long axes of the elliptic particles are aligned.
  Furthermore the distance of the particles is initially
  reduced and remains unchanged in a later stage indicating
  that the implicit particle--particle interaction is attractive
  and that this configuration is (close to) a local minimizer
  of $\cJ$.

  \begin{figure}
    \begin{center}
      \includegraphics[width=0.45\textwidth]{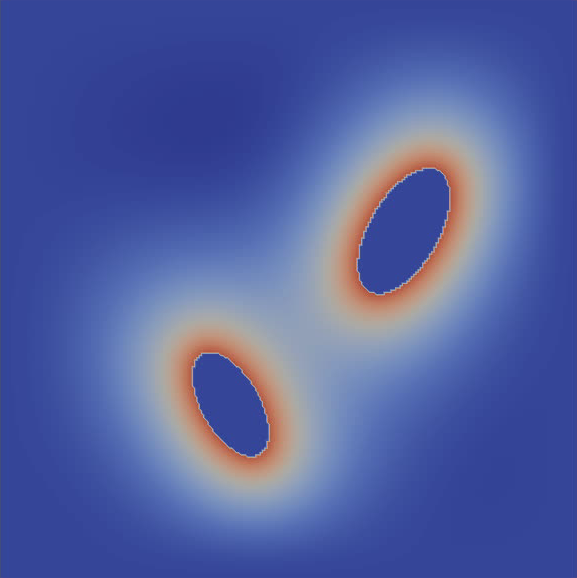}
      \ 
      \includegraphics[width=0.45\textwidth]{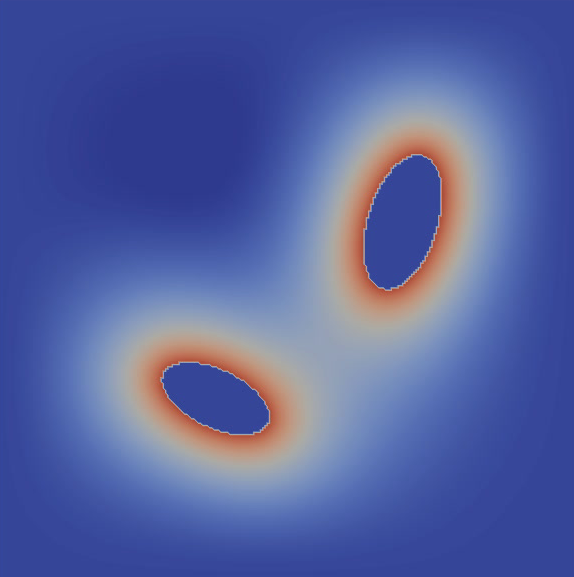}
      \\
      \vspace{0.6em}
      \includegraphics[width=0.45\textwidth]{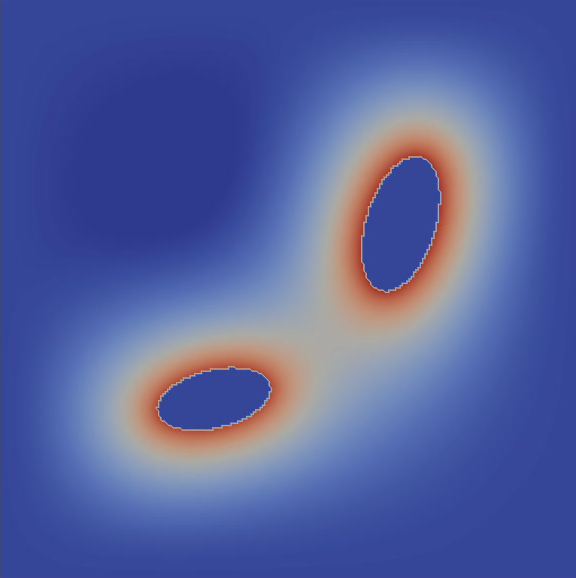}
      \ 
      \includegraphics[width=0.45\textwidth]{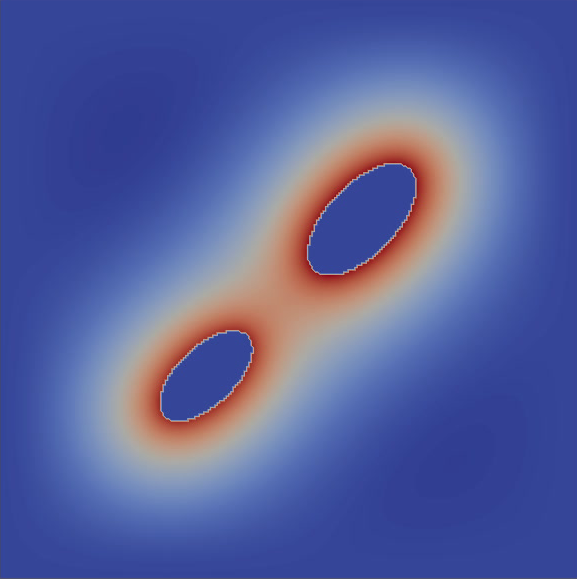}
    \end{center}
    \caption{Time steps $\p_0, \p_5, \p_{10}$, and $\p_{75}$ of a discretized gradient flow for $\cJ$ with two elliptic particles (from left to right, top to bottom).}
    \label{fig:gradientFlow}
  \end{figure}

\section{Conclusion}
  This paper considered a typical model for membrane-mediated particle interactions where the membrane is described as a continuous surface and where the particles are treated as discrete entities that couple to the membrane through certain constraints.
  Based on methods from shape calculus and the implicit function theorem we were able to give a proof for the differentiability of the interaction energy.
  Matrix calculus then enabled us to derive a formula for the first derivative that is numerically feasible in the sense that it can be evaluated from a finite element approximation of the optimal membrane shape for a fixed particle configuration and that it is possible to bound the approximation error of the derivative in terms of the discretization error of the finite element method.
  Numerical examples suggest the correctness of our results.
  
  We emphasize that the approach chosen in this paper is rather general and we expect that it can be used to prove similar results for other model formulations, too.
  Furthermore, as the differentiability proof is based on the implicit function theorem this readily gives constructive instructions on how to derive analogous formulas for higher order derivatives.
  
  Our results allow the efficient differentiation of the interaction potential and may therefore be applied in order to develop new algorithms for investigating stable particle configurations.

\section{Appendix}\label{sec:appendix}

  \begin{theorem}[Whitney extension theorem]\label{thm:whitney}
    Let $A \subseteq \R^n$ be closed, $m \in \N \cup \{\infty\}$, and $f_\alpha \colon A \rightarrow \R$ for all multi-indices $\alpha \in \N^n$ with $\abs{\alpha} \leq m$.
    Suppose that for all multi-indices $\alpha$ with $\abs{\alpha} \leq m$ and all $x,x' \in A$ holds
    \begin{align*}
      f_\alpha(x') = \sum_{\abs{\beta} \leq m - \abs{\alpha}} \frac{f_{\alpha + \beta}(x)}{\beta!} (x'-x)^\beta + R_\alpha(x';x)
    \end{align*}
    where $R_\alpha\colon A \times A \rightarrow \R$ is such that for all $x_0 \in A$ and all $\eps \in \R_{>0}$ there exists $\delta \in \R_{>0}$ such that
    \begin{align*}
      \forall{x,x' \in A\colon} \ \norm{x-x_0} < \delta \ \wedge \ \norm{x'-x_0} < \delta \ \Longrightarrow \abs{R(x';x)} \leq \norm{x-x'}^{m-\abs{\alpha}} \eps\text{.}
    \end{align*}
    Then there exists a function $F \in C^m(\R^n)$ such that $\partial^\alpha F(x) = f_\alpha(x)$ for all $x \in A$ and all multi-indices $\alpha$ with $\abs{\alpha} \leq m$.
  \end{theorem}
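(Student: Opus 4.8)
The plan is to prove \cref{thm:whitney} by the classical Whitney construction: the jet $(f_\alpha)_{\abs{\alpha}\le m}$ is extended off $A$ by gluing together Taylor polynomials anchored at nearby points of $A$, using a partition of unity adapted to the geometry of the complement $U := \R^n \setminus A$. First I would perform a Whitney decomposition of the open set $U$ into a countable family of closed dyadic cubes $\{Q_j\}$ with pairwise disjoint interiors and $\bigcup_j Q_j = U$, chosen so that the diameter $\ell_j := \operatorname{diam}(Q_j)$ is comparable to the distance to $A$, say $\ell_j \le \operatorname{dist}(Q_j, A) \le 4\ell_j$. Around each cube I take a slightly dilated cube $Q_j^*$ and build a smooth partition of unity $\{\varphi_j\}$ with $\operatorname{supp}\varphi_j \subseteq Q_j^*$, with $\sum_j \varphi_j \equiv 1$ on $U$, with bounded overlap, and satisfying the standard derivative bounds $\abs{\partial^\gamma \varphi_j} \le C_\gamma \ell_j^{-\abs{\gamma}}$.

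Next, for each $j$ I fix a point $a_j \in A$ realizing (up to a fixed factor) the distance from $Q_j$ to $A$, and form the Taylor polynomial $P_j(x) := \sum_{\abs{\alpha} \le m} \frac{f_\alpha(a_j)}{\alpha!}(x - a_j)^\alpha$. I then define
\[
  F(x) := \begin{cases} f_0(x), & x \in A, \\ \sum_j \varphi_j(x)\, P_j(x), & x \in U. \end{cases}
\]
On the open set $U$ the sum is locally finite and each summand is a polynomial times a $C^\infty$ bump, so $F \in C^\infty(U)$ and may be differentiated term by term. The substance of the proof is to show $F \in C^m(\R^n)$ together with $\partial^\alpha F|_A = f_\alpha$ for $\abs{\alpha} \le m$, the interior points of $A$ being trivial since there $f_\alpha$ already agrees with the classical derivatives.

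The key step, and the main obstacle, is the estimate near points $a \in A$ that are limits of points of $U$. For $x \in U$ close to such an $a$, I would exploit $\sum_j \varphi_j \equiv 1$ near $x$ to write
\[
  \partial^\alpha F(x) = \partial^\alpha P_a(x) + \sum_j \partial^\alpha\bigl(\varphi_j\,(P_j - P_a)\bigr)(x),
\]
where $P_a$ is the degree-$m$ Taylor polynomial of the jet anchored at $a$. The first term equals $f_\alpha(a)$ plus terms of higher order in $x-a$, hence tends to $f_\alpha(a)$ as $x \to a$. For the second term, Leibniz' rule produces factors $\partial^\gamma\varphi_j = O(\ell_j^{-\abs{\gamma}})$ against derivatives of $P_j - P_a$; since $a_j$, $a$, and $x$ all lie within distance $\approx \ell_j \approx \operatorname{dist}(x,A)$ of one another, the compatibility hypothesis—precisely the smallness $R_\alpha(x';x) = o(\norm{x-x'}^{m-\abs{\alpha}})$ of the remainders—bounds these derivatives by $o(\ell_j^{m-\abs{\alpha-\gamma}})$, so each product is $o(\ell_j^{m-\abs{\alpha}})$ and the whole sum vanishes as $x \to a$. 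This shows simultaneously that $\partial^\alpha F$ is continuous across $A$ and that it equals $f_\alpha$ there; continuity of all $\partial^\alpha F$ with $\abs{\alpha}\le m$, together with the consistency of the jet, yields $F \in C^m$.

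Finally, for $m = \infty$ a single polynomial $P_j$ will not suffice, and I would replace it by a truncation whose degree grows as $\ell_j \to 0$, so that the series and all of its derivatives converge while the remainder estimates can still be summed over the increasing degrees. I expect this bookkeeping, and the uniform control of the remainder terms near the boundary of $A$, to be the technically hardest part; the Whitney decomposition, the partition of unity, and the smoothness on $U$ are routine. Since \cref{thm:existenceCV} only invokes the theorem for smoothly extendable data on pairwise disjoint curves, the finite-$m$ case already suffices for our application.
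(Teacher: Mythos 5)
The paper does not prove \cref{thm:whitney} at all; it delegates the proof entirely to \cite[Theorem I]{Whitney34}, and your sketch is precisely the classical construction carried out in that reference (and in standard accounts such as Stein's): Whitney decomposition of $\R^n\setminus A$, a partition of unity with $\abs{\partial^\gamma\varphi_j}\le C_\gamma\ell_j^{-\abs{\gamma}}$, Taylor polynomials anchored at nearest points of $A$, and the $o(\operatorname{dist}(x,A)^{m-\abs{\alpha}})$ estimate on $\partial^\alpha F-\partial^\alpha P_a$ obtained from the remainder hypothesis applied to pairs of anchor points. So in substance your route and the paper's coincide. Two points would need to be made precise in a full write-up: (i) the bound on $\partial^{\alpha-\gamma}(P_j-P_a)$ rests on the identity expressing derivatives of a difference of two Taylor polynomials of the jet in terms of the remainders $R_\beta(a_j;a)$, which deserves to be stated as a lemma; and (ii) your closing sentence infers $F\in C^m$ from continuity of the candidate derivatives, but at boundary points of $A$ continuity alone does not give differentiability --- you need the full estimate $\partial^\alpha F(x)-\partial^\alpha P_a(x)=o(\abs{x-a}^{m-\abs{\alpha}})$, locally uniformly in $a$, together with the converse of Taylor's theorem; your displayed estimate does supply this, so the gap is one of phrasing rather than substance. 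As you note, the finite-$m$ case suffices for \cref{thm:existenceCV}, so the extra bookkeeping for $m=\infty$ can be omitted for the purposes of this paper.
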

  
  \begin{proof}
    See \cite[Theorem I]{Whitney34}.
  \end{proof}

  \begin{theorem}[Implicit function theorem]
    \label{thm:IFT}
    Let $X$, $Y$, $Z$ real Banach spaces, $A \subseteq X \times Y$ open, $F\colon A \rightarrow Z$ and $(x_0,y_0) \in A$ such that $F(x_0,y_0) = 0$.
    Suppose that the partial Fr\'{e}chet-derivative $F_y$ exists on $A$, and $F$ and $F_y$ are continuous in $(x_0,y_0)$.
    
    If $F_y(x_0,y_0)$ is invertible, then there exists an open neighborhood $\cB(x_0)$ and a unique function $y\colon \cB(x_0) \rightarrow Y$ such that $(x,y(x)) \in A$ and $F(x,y(x)) = 0$ for all $x \in \cB(x_0)$.
    Furthermore, if $F \in C^m(A,Z)$ for some $m \in \N$, then also $y \in C^m(\cB,Y)$.
    %~ In particular for $m\geq 1$ holds
    %~ \begin{align*}
      %~ y'(x) = F_y(x,y(x))^{-1} F_x(x,y(x)).
    %~ \end{align*}
  \end{theorem}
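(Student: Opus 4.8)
The plan is to recast the equation $F(x,y)=0$ as a parametrized fixed-point problem and solve it with the Banach fixed point theorem, and then to recover the regularity of the implicit function by differentiating the defining relation. Since the hypotheses only require $F_y$ to exist on $A$ and $F$, $F_y$ to be continuous at $(x_0,y_0)$, I would argue directly with contractions rather than routing through the inverse function theorem, which typically demands $C^1$ regularity.

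For existence and uniqueness, set $L := F_y(x_0,y_0)$, which is invertible by assumption, and define
\begin{align*}
  T(x,y) := y - L^{-1}F(x,y),
\end{align*}
so that $F(x,y)=0$ if and only if $y$ is a fixed point of $T(x,\cdot)$. The partial derivative $T_y(x,y) = I - L^{-1}F_y(x,y) = L^{-1}\bigl(L - F_y(x,y)\bigr)$ exists on $A$, and by continuity of $F_y$ at $(x_0,y_0)$ there are a closed ball $\overline{B}(y_0,r)$ and a neighborhood of $x_0$ on which $\norm{T_y(x,y)} \le q < 1$; since the ball is convex, the mean value inequality then makes each $T(x,\cdot)$ a $q$-contraction there. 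Continuity of $F$ in $x$ together with $F(x_0,y_0)=0$ makes $\norm{T(x,y_0)-y_0} = \norm{L^{-1}F(x,y_0)}$ as small as we like, so after shrinking to a neighborhood $\cB(x_0)$ the map $T(x,\cdot)$ sends $\overline{B}(y_0,r)$ into itself. The Banach fixed point theorem then yields for each $x \in \cB(x_0)$ a unique $y(x) \in \overline{B}(y_0,r)$ with $F(x,y(x))=0$, which gives the asserted function and its local uniqueness.

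For the regularity statement I would assume $F \in C^m(A,Z)$ with $m \ge 1$, so that $F_y$ is continuous on all of $A$ and, after shrinking $\cB(x_0)$, one may assume $F_y(x,y(x))$ stays invertible (invertibility is open and inversion is continuous). Continuity of $y$ follows from the uniform contraction estimate
\begin{align*}
  \norm{y(x)-y(x')} \le \frac{1}{1-q}\,\norm{T(x,y(x'))-T(x',y(x'))},
\end{align*}
whose right-hand side equals $\tfrac{1}{1-q}\norm{L^{-1}\bigl(F(x,y(x'))-F(x',y(x'))\bigr)}$ and tends to $0$ as $x \to x'$ by continuity of $F$. The candidate derivative is $Dy(x) = -F_y(x,y(x))^{-1}F_x(x,y(x))$, which I would verify from the Fr\'echet differentiability of $F$ by expanding $0 = F(x+h,y(x+h)) - F(x,y(x))$ to first order and using the now uniform invertibility of $F_y$ to control the remainder. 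Finally, $C^m$-regularity follows by a bootstrap: the map $A \mapsto A^{-1}$ is smooth on the open set of invertible operators, so the formula for $Dy$ expresses it as a composition of $y$ with the $C^{m-1}$ maps $F_x$, $F_y$ and inversion; if $y \in C^k$ with $k < m$ this forces $Dy \in C^{\min(k,m-1)}$, hence $y \in C^{\min(k,m-1)+1}$, and iterating from $k=0$ raises the regularity to $C^m$.

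The main obstacle is the differentiability step: producing the quantitative error estimate identifying $Dy$ with the candidate formula, where the interplay between the differentiability of $F$ and the uniform bound on $F_y(x,y(x))^{-1}$ must be made precise. This is the Hildebrandt--Graves form of the implicit function theorem, and within the present appendix it would be equally acceptable to cite a standard reference rather than reproduce these estimates in full.
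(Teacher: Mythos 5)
The paper does not actually prove this theorem; it disposes of it with a one-line citation to Hildebrandt and Graves, and your contraction-mapping argument is precisely the classical proof contained in that reference. Your outline is correct --- the fixed-point map $T(x,y) = y - L^{-1}F(x,y)$, the mean-value contraction estimate from continuity of $F_y$ at $(x_0,y_0)$, the self-mapping of the closed ball, the candidate derivative $-F_y(x,y(x))^{-1}F_x(x,y(x))$, and the bootstrap to $C^m$-regularity are all sound --- so either writing it out in this form or simply citing the standard reference, as you note at the end, would be acceptable here.
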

  
  \begin{proof}
    See \cite{HiGr27}.
  \end{proof}

  \begin{lemma}[Transformation of derivatives]
    \label{thm:transformationDerivatives}
    Suppose $X\colon \Omega_1 \rightarrow \Omega_2$ is a diffeomorphism and let $u \in H^2(\Omega_1)$.
    Then
    \begin{align*}
      \int_{\Omega_2} \kappa (\Delta (u \circ X^{-1}))^2 + \sigma \norm{\nabla (u \circ X^{-1})}^2 \d{x}
      & = \int_{\Omega_1} \kappa \frac{\div\left( A \nabla u\right)^2}{\abs{\det DX}} + \sigma \norm{\nabla u}_A^2 \d{x}
    \end{align*}
    where
    \begin{align*}
      A(x) \colonequals \abs{\det DX(x)} \, (DX(x))^{-1}  (DX(x))^{-T}
    \end{align*}
    and
    \begin{align*}
      \norm{\nabla u(x)}_{A(x)}^2 \colonequals \nabla u(x)^T A(x)\nabla u(x)\text{.}
    \end{align*}
  \end{lemma}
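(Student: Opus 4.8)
The plan is to write $v := u \circ X^{-1}$ on $\Omega_2$, so that $u = v \circ X$, and to pull the two integrals over $\Omega_2$ back to $\Omega_1$ through the substitution $y = X(x)$, under which $\d{y} = \abs{\det DX(x)}\,\d{x}$. The surface-tension term is immediate: differentiating $u = v \circ X$ by the chain rule gives $\nabla u = (DX)^T\,(\nabla v)\circ X$, hence $(\nabla v)\circ X = (DX)^{-T}\nabla u$, so that
\[
  \norm{\nabla v}^2\circ X = \nabla u^T (DX)^{-1}(DX)^{-T}\nabla u .
\]
Multiplying by $\abs{\det DX}$ from the change of variables turns $(DX)^{-1}(DX)^{-T}$ into $A$, and therefore $\int_{\Omega_2}\sigma\norm{\nabla v}^2\,\d{y} = \int_{\Omega_1}\sigma\norm{\nabla u}_A^2\,\d{x}$ without any further work.

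The heart of the lemma is the bending term, which I would reduce to the single pointwise identity
\begin{equation*}
  (\Delta v)\circ X = \frac{1}{\abs{\det DX}}\,\div(A\nabla u).
  \tag{$\ast$}
\end{equation*}
Granting $(\ast)$, squaring, multiplying by $\abs{\det DX}$ and changing variables gives $\int_{\Omega_2}\kappa(\Delta v)^2\,\d{y} = \int_{\Omega_1}\kappa\,((\Delta v)\circ X)^2\abs{\det DX}\,\d{x} = \int_{\Omega_1}\kappa\,\div(A\nabla u)^2/\abs{\det DX}\,\d{x}$, which together with the surface-tension term is precisely the asserted right-hand side.

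To prove $(\ast)$ I would write $\Delta v = \div_y(\nabla v)$ and invoke the Piola transform identity: for a smooth vector field $F$ on $\Omega_2$,
\[
  \div_x\!\big(\det(DX)\,(DX)^{-1}(F\circ X)\big) = \det(DX)\,(\div_y F)\circ X .
\]
This follows from Jacobi's formula, \ie from the fact that the columns of the cofactor matrix $\det(DX)(DX)^{-T}$ are divergence free, combined with the chain rule for $F\circ X$. Taking $F = \nabla v$, so that $F\circ X = (DX)^{-T}\nabla u$, the argument of the divergence becomes $\det(DX)(DX)^{-1}(DX)^{-T}\nabla u = \operatorname{sgn}(\det DX)\,A\nabla u$; since $\det DX$ is of locally constant sign, this factor appears on both sides of the identity and cancels upon dividing by $\det DX$, leaving exactly $(\ast)$.

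The main obstacle is regularity. Both $(\ast)$ and the Piola computation are classical only for $C^2$ data, whereas here $u \in H^2(\Omega_1)$ and $X$ is merely a $C^2$ diffeomorphism (in the application $\cX \in C^m$ with $m \geq 2$). I would therefore first prove the full integral identity for $u_n \in C^\infty(\overline{\Omega_1})$ by the argument above, and then pass to the limit along a sequence $u_n \to u$ in $H^2(\Omega_1)$. For this I would use that $w \mapsto w \circ X^{-1}$ is a bounded isomorphism $H^2(\Omega_1) \to H^2(\Omega_2)$ --- exactly the mapping property already exploited in \cref{thm:PhiExistence} via \cite[Theorem 3.35]{Adams75} --- so the left-hand side is continuous in $u$, while the right-hand integrand is a fixed quadratic form in $\nabla u$ and $D^2 u$ whose coefficients are built from $DX$, $D^2 X$ and $\det DX$ and are bounded on the compact $\overline{\Omega_1}$, making the right-hand side continuous on $H^2(\Omega_1)$ as well. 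Equality on the dense smooth subspace then propagates to all of $H^2(\Omega_1)$, completing the proof.
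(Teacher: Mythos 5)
Your proposal is correct, and its skeleton coincides with the paper's: the surface-tension term is handled by the chain rule $(\nabla v)\circ X=(DX)^{-T}\nabla u$ plus the change of variables, and the bending term is reduced to the pointwise identity $(\Delta v)\circ X=\div(A\nabla u)/\abs{\det DX}$, which is exactly the paper's equation \eqref{eq:helper1707071113}. Where you diverge is in how that identity is established. The paper stays in the weak setting throughout: it tests $\Delta(u\circ X^{-1})$ against $\phi\circ X^{-1}$ for $\phi\in C_0^\infty(\Omega_1)$, integrates by parts in $\Omega_2$, uses the gradient identity to pull the resulting Dirichlet form back to $\Omega_1$, integrates by parts again, and concludes by the fundamental lemma of the calculus of variations. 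This works directly for $u\in H^2(\Omega_1)$ and needs no approximation step. You instead prove the identity classically via the Piola/cofactor computation (Jacobi's formula and $\div(\operatorname{cof}DX)=0$), which is a clean and transparent derivation, but it forces you to first work with smooth $u$ and then close the argument by density and continuity of both sides in $H^2$. That final step is sound --- by Meyers--Serrin you can even take approximants in $C^\infty(\Omega_1)\cap H^2(\Omega_1)$, so no boundary regularity of $\Omega_1$ is needed, and the mapping property of $w\mapsto w\circ X^{-1}$ on $H^2$ is already available from \cref{thm:PhiExistence} --- but it does make your proof slightly longer and implicitly relies on the uniform bounds on $DX$ and $\det DX$ that the paper only invokes in the application (where $\overline{\Omega(\p)}$ is compact), not in the bare statement of the lemma. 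Your handling of the sign of $\det DX$ is also a point the paper glosses over by writing absolute values from the start. In short: same decomposition and same key identity, different mechanism for proving it; the paper's weak-formulation route is self-contained at $H^2$ regularity, while yours trades that for a more classical and arguably more illuminating pointwise computation.
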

  
  \begin{proof}
    First note that for $x \in \Omega_2$ and $v \in H^1(\Omega)$
    \begin{align}
      \label{eq:helper1707071054}
      \nabla (v \circ X^{-1} )(x)
      & = (D(X^{-1})(x))^T \nabla v(X^{-1}(x))
      = (DX(X^{-1}(x)))^{-T} \nabla v(X^{-1}(x))
    \end{align}
    holds almost-everywhere.
    Equation \eqref{eq:helper1707071054} together with the transformation formula applied to the diffeomorphism $X$ we obtain for all $v,w \in H^1(\Omega)$
    \begin{align}
      \label{eq:helper1707071100}
      \begin{aligned}
        & \int_{\Omega_2} \left(\nabla (v\circ X^{-1})(x)\right)^T \nabla (w \circ X^{-1})(x) \d{x}
        \\ & \qquad = \int_{\Omega_2} \nabla v(X^{-1}(x))^T (DX(X^{-1}(x)))^{-1} (DX(X^{-1}(x)))^{-T} \nabla w (X^{-1}(x)) \d{x}
        \\ & \qquad = \int_{\Omega_1} \nabla v(x)^T DX(x)^{-1} DX(x)^{-T} \nabla w(x) \abs{\det DX(x)} \d{x}
        \\ & \qquad = \int_{\Omega_1} \langle \nabla v(x), \nabla w(x) \rangle_{A(x)} \d{x}
        \text{.}
      \end{aligned}
    \end{align}
    By integration by parts and \eqref{eq:helper1707071100} we know that for all $\phi \in C_0^\infty(\Omega_1)$ holds
    \begin{align}
      \label{eq:helper1707071106}
      \begin{aligned}
        \int_{\Omega_2} & \Delta (u \circ X^{-1})(x) \, (\phi \circ X^{-1})(x) \d{x}
        \\ & = -\int_{\Omega_2} \nabla (u \circ X^{-1})(x) \cdot \nabla (\phi \circ X^{-1})(x) \d{x}
          + \int_{\partial\Omega_2} \partial_\nu (u \circ X^{-1})(x) \, (\phi \circ X^{-1})(x) \d{x}
        \\ & = - \int_{\Omega_1} \left( A(x) \nabla u(x) \right) \cdot \nabla \phi(x) \d{x}
        \\ & = \int_{\Omega_1} \div\left( A(x) \nabla u(x) \right) \, \phi(x) \d{x}
          - \int_{\partial\Omega_1} \left( A(x) \nabla u(x) \right) \partial_\nu \phi(x) \d{x}
        \\ & = \int_{\Omega_1} \div\left( A(x) \nabla u(x) \right) \, \phi(x) \d{x}
      \end{aligned}
    \end{align}
    where the boundary terms vanish as of $\phi\circ X^{-1} \in C_0^\infty(\Omega_2)$ and $\phi \in C_0^\infty(\Omega_1)$, respectively.
    On the other hand, application of the transformation formula also yields
    \begin{align}
      \label{eq:helper1707071107}
      \begin{aligned}
        \int_{\Omega_2} \Delta (u \circ X^{-1})(x) \, (\phi \circ X^{-1})(x) \d{x}
        & = \int_{\Omega_1} \Delta (u \circ X^{-1})(X(x)) \, \phi(x) \, \abs{\det DX(x)} \d{x}
        \text{.}
      \end{aligned}
    \end{align}
    Combining \eqref{eq:helper1707071106} and \eqref{eq:helper1707071107} leads to
    \begin{align*}
      \int_{\Omega_1} \abs{\det DX(x)}\Delta (u \circ X^{-1})(X(x)) \, \phi(x) \d{x}
      & = \int_{\Omega_1} \div\left( A(x) \nabla u(x) \right) \, \phi(x) \d{x}
    \end{align*}
    for all $\phi \in C_0^\infty(\Omega_1)$.
    The fundamental theorem of calculus of variations then readily implies that
    \begin{align}
      \label{eq:helper1707071113}
      \Delta (u \circ X^{-1})(X(x)) & = \frac{\div\left( A(x) \nabla u(x) \right)}{\abs{\det DX(x)}}
    \end{align}
    holds for almost-every $x \in \Omega_1$.
    Because $X$ is a diffeomorphism, this expression is well-defined as of $\abs{\det DX(x)} \neq 0$ for all $x \in \Omega_1$.
    Hence, by virtue of the transformation formula and \eqref{eq:helper1707071113} we obtain
    \begin{align}
      \label{eq:helper1707071115}
      \begin{aligned}
        \int_{\Omega_2} (\Delta (u \circ X^{-1})(x))^2 \d{x}
        & = \int_{\Omega_1} \left(\Delta (u \circ X^{-1})(X(x)) \right )^2 \abs{\det DX(x)} \d{x}
        \\ & = \int_{\Omega_1} \frac{\div\left( A(x) \nabla u(x) \right)^2}{\abs{\det DX(x)}} \d{x}\text{.}
      \end{aligned}
    \end{align}
    Finally, the desired assertion is a direct consequence of \eqref{eq:helper1707071100} and \eqref{eq:helper1707071115}.
  \end{proof}
%%%%%%%%%%%%%%%%%%%
% Bibliography    %
%%%%%%%%%%%%%%%%%%%

\bibliography{bibliography}

\begin{thebibliography}{10}

\bibitem{Adams75}
{\sc R.~A. Adams}, {\em Sobolev Spaces}, Pure and applied mathematics, Academic
  Press, 1975.

\bibitem{ArBeMaGr16}
{\sc D.~Argudo, N.~P. Bethel, F.~V. Marcoline, and M.~Grabe}, {\em Continuum
  descriptions of membranes and their interaction with proteins: {T}owards
  chemically accurate models}, Biochimica et Biophysica Acta (BBA) -
  Biomembranes, 1858 (2016), pp.~1619 -- 1634.
\newblock New approaches for bridging computation and experiment on membrane
  proteins.

\bibitem{Canham1970}
{\sc P.~B. Canham}, {\em The minimum energy of bending as a possible
  explanation of the biconcave shape of the human red blood cell}, Journal of
  Theoretical Biology, 26 (1970), pp.~61--81.

\bibitem{SoZo11}
{\sc M.~Delfour and J.~Zol{\'e}sio}, {\em Shapes and Geometries}, Society for
  Industrial and Applied Mathematics, second~ed., 2011.

\bibitem{Deserno2015}
{\sc M.~Deserno}, {\em Fluid lipid membranes: From differential geometry to
  curvature stresses}, Chemistry and Physics of Lipids, 185 (2015), pp.~11--45.
\newblock cited By 14.

\bibitem{DoFo02}
{\sc P.~G. Dommersnes and J.-B. Fournier}, {\em The many-body problem for
  anisotropic membrane inclusions and the self-assembly of "saddle" defects
  into an "egg carton"}, Biophysical Journal, 83 (2002), pp.~2898--2905.
\newblock cited By 28.

\bibitem{ElliottFritzHobbs2017}
{\sc C.~M. Elliott, H.~Fritz, and G.~Hobbs}, {\em Small deformations of
  {H}elfrich energy minimising surfaces with applications to biomembranes},
  Mathematical Models and Methods in Applied Sciences, 27 (2017),
  pp.~1547--1586.

\bibitem{ElGrHoKoWo16}
{\sc C.~M. Elliott, C.~Gr{\"a}ser, G.~Hobbs, R.~Kornhuber, and M.-W. Wolf},
  {\em A variational approach to particles in lipid membranes}, Archive for
  Rational Mechanics and Analysis, 222 (2016), pp.~1011--1075.

\bibitem{FoGa15}
{\sc J.-B. Fournier and P.~Galatola}, {\em High-order power series expansion of
  the elastic interaction between conical membrane inclusions}, The European
  Physical Journal E, 38 (2015), p.~86.

\bibitem{GoBrPi93}
{\sc M.~Goulian, R.~Bruinsma, and P.~Pincus}, {\em Long-range forces in
  heterogeneous fluid membranes}, EPL (Europhysics Letters), 22 (1993), p.~145.

\bibitem{Graeser2015}
{\sc C.~Gr{\"a}ser}, {\em A note on {P}oincar{\'e}- and {F}riedrichs-type
  inequalities}, tech. rep., 2015.
\newblock arxiv:1512.02842.

\bibitem{GrKi17}
{\sc C.~Gr{\"a}ser and T.~Kies}, {\em Discretization error estimates for
  penalty formulations of a linearized {C}anham--{H}elfrich type energy}, IMA
  J. Numer. Anal., 39 (2019), pp.~626--649.

\bibitem{Hartman02}
{\sc P.~Hartman}, {\em Ordinary Differential Equations}, Society for Industrial
  and Applied Mathematics, second~ed., 2002.

\bibitem{HaPh13}
{\sc C.~A. Haselwandter and R.~Phillips}, {\em Directional interactions and
  cooperativity between mechanosensitive membrane proteins}, EPL, 101 (2013).
\newblock cited By 5.

\bibitem{HauckeKozlov2018}
{\sc V.~Haucke and M.~M. Kozlov}, {\em Membrane remodeling in clathrin-mediated
  endocytosis}, J Cell Sci, 131 (2018).

\bibitem{Helfrich1973}
{\sc W.~Helfrich}, {\em Elastic properties of lipid bilayers: theory and
  possible experiments}, Zeitschrift f{\"u}r Naturforschung C, 28 (1973),
  pp.~693--703.

\bibitem{HiGr27}
{\sc T.~H. Hildebrandt and L.~M. Graves}, {\em Implicit functions and their
  differentials in general analysis}, Transactions of the American Mathematical
  Society, 29 (1927), pp.~127--153.

\bibitem{KaKoKlHa16}
{\sc O.~Kahraman, P.~D. Koch, W.~S. Klug, and C.~A. Haselwandter}, {\em
  Bilayer-thickness-mediated interactions between integral membrane proteins},
  Physical Review E - Statistical, Nonlinear, and Soft Matter Physics, 93
  (2016).
\newblock cited By 0.

\bibitem{KiNeOs00}
{\sc K.~S. Kim, J.~Neu, and G.~Oster}, {\em Effect of protein shape on
  multibody interactions between membrane inclusions}, Physical Review E -
  Statistical Physics, Plasmas, Fluids, and Related Interdisciplinary Topics,
  61 (2000), pp.~4281--4285.
\newblock cited By 23.

\bibitem{KiNeOs98}
{\sc S.~Kim, K.\, J.~Neu, and G.~Oster}, {\em Curvature-mediated interactions
  between membrane proteins}, Biophysical Journal, 75 (1998), pp.~2274--2291.

\bibitem{Lax97}
{\sc P.~D. Lax}, {\em Linear Algebra}, Pure and Applied Mathematics: A Wiley
  Series of Texts, Monographs and Tracts, Wiley, 1997.

\bibitem{ReDe11}
{\sc B.~J. Reynwar and M.~Deserno}, {\em Membrane-mediated interactions between
  circular particles in the strongly curved regime}, Soft Matter, 7 (2011),
  pp.~8567--8575.
\newblock cited By 25.

\bibitem{ScKo15}
{\sc Y.~Schweitzer and M.~M. Kozlov}, {\em Membrane-mediated interaction
  between strongly anisotropic protein scaffolds}, PLoS Computational Biology,
  11 (2015).
\newblock cited By 7.

\bibitem{WeKoHe98}
{\sc T.~R. Weikl, M.~M. Kozlov, and W.~Helfrich}, {\em Interaction of conical
  membrane inclusions: Effect of lateral tension}, Phys. Rev. E, 57 (1998),
  pp.~6988--6995.

\bibitem{Whitney34}
{\sc H.~Whitney}, {\em Analytic extensions of differentiable functions defined
  in closed sets}, Trans. Am. Math. Soc., 36 (1934), pp.~63--89.
\newblock MR:1501735. Zbl:0008.24902. JFM:60.0217.01.

\bibitem{YoHaDe14}
{\sc C.~Yolcu, R.~C. Haussman, and M.~Deserno}, {\em The effective field theory
  approach towards membrane-mediated interactions between particles}, Advances
  in Colloid and Interface Science, 208 (2014), pp.~89--109.
\newblock Special issue in honour of Wolfgang Helfrich.

\end{thebibliography}
\bibliographystyle{siam}

\end{document}